\def\bE{{\mathbb{E}}}
\def\bR{{\mathbb{R}}}
\def\bRd{{\mathbb{R}}^{\mathrm{d}}}
\def\cF{{\mathcal{F}}}
\def\cG{{\mathcal{G}}}
\def\cC{{\mathcal{C}}}
\def\cO{{\mathcal{O}}}
\def\cS{{\mathcal{S}}}
\def\dd{{\mathrm{d}}}
\newcommand\Laplace{\boldsymbol{\Delta}}
\newcommand\mfk[1]{\mathfrak{#1}}
\def\mfi{\mathfrak{i}}
\def\ds{\displaystyle}
\def\wc{\overset{d}{=}}
\newtheorem{theorem}{Theorem}
\theoremstyle{plain}
\newtheorem{corollary}[theorem]{Corollary}
\newtheorem{definition}[theorem]{Definition}
\newtheorem{proposition}[theorem]{Proposition}
\newtheorem{remark}[theorem]{Remark}
\newcommand\bel[1]{\begin{equation}\label{#1}}
\newcommand\ee{\end{equation}}
\numberwithin{theorem}{section}
\numberwithin{equation}{section}
\begin{document}
\title[Free Fields and SPDEs]
{Gaussian Fields and Stochastic Heat Equations}

\author{S. V. Lototsky}
\curraddr[S. V. Lototsky]{Department of Mathematics, USC\\
Los Angeles, CA 90089}
\email[S. V. Lototsky]{lototsky@usc.edu}
\urladdr{https://dornsife.usc.edu/sergey-lototsky/}

\author{A. Shah}
\curraddr[A. Shah]{Department of Mathematics, USC\\
	Los Angeles, CA 90089}
\email[A. Shah]{apoorvps@usc.edu}
\urladdr{}

\subjclass[2010]{Primary 60H15; Secondary 35R60, 60H40}

 \keywords{Cameron-Martin space; Cylindrical Brownian motion; Green's function; Homogenous distributions; Hermite heat equation}

\begin{abstract}
The objective of the paper  is to characterize the Gaussian free field as a stationary solution of the heat equation
with additive space-time white noise. In the case of $\bRd$, the investigation leads to  other types of
Gaussian fields, as well as interesting phenomena  in  dimensions one and two.
\end{abstract}

\maketitle

\today

\section{Introduction}
It is well-known, for example by the Donsker theorem \cite[Corollary VII.3.11]{JS-Lim},
 that a suitably  scaled simple symmetric random walk on $[0,1]$ converges to the standard Brownian motion.
When pinned (conditioned to hit zero) at the right point $x=1$, the same random walk converges to the Brownian bridge $\bar{W}=\bar{W}(x)$,
a Gaussian process on $[0,1]$ with mean zero and covariance
$$
\bE\big( \bar{W}(x)\bar{W}(y)\big)=\min(x,y)-xy;
$$
cf. \cite[Chapter VI]{Levy}.

What would a multi-dimensional version of these results be? In other words, what are the  scaling  limits of discrete random  objects in the plane or
in the space, or in higher dimensions?

In many models \cite[etc.]{Ken-GFF, PS-GFF} this limiting object is a Gaussian free field \cite{S-GFF}. While
well-known  in theoretical physics, for example, as a starting point in the construction of certain quantum field theories
\cite{Simon-qft}, the Gaussian free field  is a relatively new  area  of research in mathematics.

Let $\mathcal{O}\subseteq \bRd$ be a  domain and let $\Phi_{\mathcal{O}}=\Phi_{\mathcal{O}}(x,y),\ x,y\in \cO,$ be
Green's function of the Laplacian $\Laplace$ in $\mathcal{O}$ with suitable homogeneous  boundary conditions.
A {\em Gaussian free field} on $\mathcal{O}$ is usually defined as a (generalized) Gaussian process $\bar{W}=\bar{W}(x),\ x\in \cO$,
such that
\bel{GFF-Gen}
\bE\bar{W}(x)=0,\ \bE\Big(\bar{W}(x)\bar{W}(y)\Big)=\Phi_{\mathcal{O}}(x,y),\ \ x,y\in \cO.
\ee
If $\dd>1$, then  the function $ \Phi_{\mathcal{O}} $ has a singularity on the diagonal $x=y$. By \eqref{GFF-Gen},
 $\bE|\bar{W}(x)|^2=+\infty$ for all $x\in \cO$, meaning that  $\bar{W}$  must indeed be
a generalized process, or a random generalized function (distribution),  indexed by test functions on $\cO$ rather than points in $\cO$.

Let us assume that the equation
\bel{LE-0}
\Laplace v =-f
\ee
is well-posed in a sufficiently rich class $\cG$ of functions $f$ on $\cO$ and the
 solution of \eqref{LE-0} can be written as
$$
v(x)=\int_{\cO} \Phi_{\cO}(x,y) f(y)\, dy.
$$
 Then the Gaussian free field $\bar{W}$ on $\cO$ is defined as
  a collection of zero-mean Gaussian random variables $\bar{W}[f],\ f\in \cG$,
such that
\bel{GFF-Gen-ii}
\bE\Big( \bar{W}[f]\bar{W}[g]\Big)=\iint\limits_{\cO\times\cO} \Phi_{\cO}(x,y) f(x)g(y)\, dxdy,\ \ f,g\in \cG.
\ee
If  $\bar{W}=\bar{W}(x),\ x\in \cO,$ is a collection of Gaussian random variables satisfying \eqref{GFF-Gen},
then $\bar{W}$ defines a random distribution on $\cG$ by
$$
\bar{W}[f]=\int_{\cO} \bar{W}(x)f(x)\, dx,\ \ f\in \cG,
$$
which is  a collection of  zero-mean Gaussian random variables satisfying  \eqref{GFF-Gen-ii}.

Let $\mathbb{F}=\big(\Omega, \cF, \{\cF_t\}_{t\geq 0}, \mathbb{P}\big)$ be a stochastic basis with the usual assumptions \cite[Definition I.1.3]{JS-Lim}, on which
  countably many independent standard Brownian motions $w_k=w_k(t),
 \ t\geq 0,\ k=1,2,\ldots$ are defined. The stochastic basis $\mathbb{F}$ will be fixed throughout the rest of the paper.

  The  {\em space-time Gaussian white noise} $\dot{W}=\dot{W}(t,x)$ on $\cO$ is a collection of zero-mean
 Gaussian random variables  $\dot{W}[f]$,
 $f\in L_2\big((0,+\infty)\times \cO),$  such that
 $$
 \bE\Big(\dot{W}[f]\dot{W}[g]\Big)=\int_0^{+\infty} \int_{\cO} f(t,x)g(t,x)\, dx dt.
 $$
  Given an orthonormal basis $\{\mfk{h}_k=\mfk{h}_k(x),\ k\geq 1\}$ in $L_2(\cO)$,  the process $\dot{W}$ can be written as a (formal)
  sum
  \bel{STWN}
  \dot{W}(t,x)=\sum_{k=1}^{\infty} \mfk{h}_k(x)\dot{w}_k(t).
  \ee
  Similarly,
  \bel{CBM}
  W(t,x)=\sum_{k=1}^{\infty} \mfk{h}_k(x)w_k(t)
  \ee
  is called {\em cylindrical Brownian motion} on $L_2(\cO)$.
  For a square integrable function $f=f(t,x)$,
  $$
  \int_0^t \int_{\cO} f(s,y)W(ds,dy)=\sum_{k=1}^{\infty} \int_0^t\left(\int_{\cO} f(s,y)\mfk{h}_k(y)\, dy \right) dw_k(s);
  $$
  cf. \cite[Chapter 2]{Walsh}.

The objective of this paper is to characterize the Gaussian free field as the stationary solution of a heat equation driven by space-time
Gaussian white noise.
\begin{theorem}
\label{th:main-i}
Let $u=u(t,x)$ be a solution of
\bel{HE-main-i}
u_t(t,x)=\nu\,\Laplace u(t,x)+\sigma\dot{W}(t,x),\ t>0, \ x\in \cO\subseteq \bRd,
\ee
with  initial  condition $u(0,x)=\varphi(x)$ independent of $W$
and with constant $\nu>0$, $\sigma>0$; suitable boundary conditions are imposed if $\cO\subset \bRd.$

Then, as $t\to +\infty$, $u$ converges weakly  to a scalar multiple of the Gaussian free field on $\cO$.
\end{theorem}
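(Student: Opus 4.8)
The plan is to diagonalize the equation in the eigenbasis of the Laplacian, reduce it to a countable family of independent Ornstein--Uhlenbeck processes, and match the resulting stationary covariance with Green's function. I would first treat $\cO$ bounded with, say, homogeneous Dirichlet conditions, so that $-\Laplace$ admits an orthonormal basis of eigenfunctions $\{\mfk{h}_k\}$ in $L_2(\cO)$ with eigenvalues $0<\lambda_1\le\lambda_2\le\cdots\to+\infty$, and I would use precisely this basis in the expansion \eqref{STWN} of the noise. Writing $u_k(t)=\langle u(t,\cdot),\mfk{h}_k\rangle$ and projecting \eqref{HE-main-i} onto $\mfk{h}_k$ turns the SPDE into the decoupled scalar equations $du_k(t)=-\nu\lambda_k u_k(t)\,dt+\sigma\,dw_k(t)$, i.e.\ independent Ornstein--Uhlenbeck processes driven by the independent Brownian motions $w_k$. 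Because $\{\mfk{h}_k\}$ diagonalizes both the operator and the noise, the whole infinite-dimensional problem reduces to these one-dimensional SDEs, which can be solved explicitly.

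Next I would solve each scalar equation, $u_k(t)=e^{-\nu\lambda_k t}u_k(0)+\sigma\int_0^t e^{-\nu\lambda_k(t-s)}\,dw_k(s)$. Since $\lambda_k>0$ and $u_k(0)=\langle\varphi,\mfk{h}_k\rangle$ is independent of $W$, the first term tends to $0$ and the stochastic integral is centred Gaussian with variance $\frac{\sigma^2}{2\nu\lambda_k}\big(1-e^{-2\nu\lambda_k t}\big)\to\frac{\sigma^2}{2\nu\lambda_k}$, so $u_k(t)$ converges weakly to $N\!\big(0,\tfrac{\sigma^2}{2\nu\lambda_k}\big)$. Recalling the spectral representation $\Phi_{\cO}(x,y)=\sum_k \lambda_k^{-1}\mfk{h}_k(x)\mfk{h}_k(y)$ of Green's function, the candidate limit is the Gaussian field with covariance $\sum_k\frac{\sigma^2}{2\nu\lambda_k}\mfk{h}_k(x)\mfk{h}_k(y)=\frac{\sigma^2}{2\nu}\Phi_{\cO}(x,y)$, which is exactly $\frac{\sigma}{\sqrt{2\nu}}$ times the Gaussian free field of \eqref{GFF-Gen}--\eqref{GFF-Gen-ii}; this pins down the scalar multiple.

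To pass from coordinates to the field, I would test against $f\in\cG$ and split $u[f](t)=\langle e^{\nu t\Laplace}f,\varphi\rangle+\sigma\sum_k\langle f,\mfk{h}_k\rangle\int_0^t e^{-\nu\lambda_k(t-s)}\,dw_k(s)$. The first term tends to $0$ in probability, while the second is centred Gaussian with variance $\sum_k\langle f,\mfk{h}_k\rangle^2\tfrac{\sigma^2}{2\nu\lambda_k}\big(1-e^{-2\nu\lambda_k t}\big)\to\frac{\sigma^2}{2\nu}\iint_{\cO\times\cO}\Phi_{\cO}(x,y)f(x)f(y)\,dx\,dy$; hence $u[f](t)$ converges in law to the correct centred Gaussian. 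Applying this to arbitrary finite linear combinations $\sum_j c_j f_j\in\cG$ and invoking the Cram\'er--Wold device yields convergence of all finite-dimensional distributions to those of $\frac{\sigma}{\sqrt{2\nu}}\bar W$; as every law in sight is Gaussian, convergence of covariances is all that is needed to identify the limit.

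The main obstacle is the topology in which the convergence actually takes place, which is delicate precisely because for $\dd\ge 2$ the limit is a genuine distribution rather than a function. I would fix a negative-order Sobolev space $\Hep^{-s}(\cO)$, the completion of $\cG$ under $\|f\|_{-s}^2=\sum_k \lambda_k^{-s}\langle f,\mfk{h}_k\rangle^2$, and choose $s$ so that the limiting covariance operator is trace class there; since $\bE\|u_\infty\|_{-s}^2=\frac{\sigma^2}{2\nu}\sum_k\lambda_k^{-1-s}$, Weyl's asymptotics $\lambda_k\sim c\,k^{2/\dd}$ force $s>\dd/2-1$. In that space $u(t,\cdot)$ is for each $t$ a Gaussian random element, and I would upgrade the coordinatewise statements to convergence of its covariance operator $Q_t$ to the limiting operator in trace norm (monotone convergence of the eigenvalues $\lambda_k^{-1-s}(1-e^{-2\nu\lambda_k t})$, dominated by the summable $\lambda_k^{-1-s}$) together with the a.s.\ decay $\|e^{\nu t\Laplace}\varphi\|_{-s}\le e^{-\nu\lambda_1 t}\|\varphi\|_{-s}\to 0$ of the initial-condition term. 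The classical criterion for weak convergence of Gaussian measures on a separable Hilbert space then gives weak convergence of $u(t,\cdot)$ to $\frac{\sigma}{\sqrt{2\nu}}\bar W$. Finally, for the whole-space case $\cO=\bRd$ the sums become Fourier integrals and the argument persists for $\dd\ge 3$; the stationary spectral density $\frac{\sigma^2}{2\nu|\xi|^2}$ is non-integrable near $\xi=0$ when $\dd\in\{1,2\}$, which is the source of the low-dimensional phenomena announced in the abstract and must be handled separately.
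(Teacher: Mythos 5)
Your proposal is correct and follows essentially the same route as the paper for a bounded domain: diagonalize in the eigenbasis of $\Laplace$, reduce to independent Ornstein--Uhlenbeck coordinates with limiting variances $\sigma^2/(2\nu\lambda_k)$, match the stationary covariance with the spectral expansion of Green's function, and upgrade to weak convergence of Gaussian measures on a negative-order space of the Hilbert scale where the limiting covariance is trace class (your condition $s>\dd/2-1$ agrees with the paper's $H^{1-\gamma}$, $\gamma>\dd/2$). The one place where you are too optimistic is the claim that for $\cO=\bRd$ ``the argument persists'' for $\dd\ge 3$: the cylindrical Brownian motion on $L_2(\bRd)$ does not take values in any Fourier-defined Sobolev space $H^{-s}(\bRd)$, so both well-posedness and the state space for the weak convergence have to be set up in a different scale (the paper uses the Hermite operator $-\Laplace+|x|^2$, whose inverse powers are Hilbert--Schmidt), and, since the whole-space heat semigroup has no spectral gap, the decay of the term $S_t\varphi$ is not automatic and requires an additional assumption on $\varphi$; the identification of the limit is then done through the Fourier transform on test functions in $\cS_0(\bRd)$, which is also how the $\dd=1,2$ pathology you flag is isolated.
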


In other words, as $t\to +\infty$, the solution of the stochastic parabolic equation \eqref{th:main-i} converges
in distribution to the solution of the stochastic elliptic equation
$$
(-\nu \Laplace)^{1/2} v(x) = \sigma {V}(x),
$$
where  ${V}$  is Gaussian white noise (or an isonormal Gaussian process) on $L_2(\cO)$.
By comparison, direct computations show that,  as $t\to +\infty$,
  the solution of the deterministic heat equation
$u_t=\nu\Laplace u + f(x)$ in a bounded domain or in $\bRd$, $d\geq 3$,
with a smooth compactly supported  $f$,
converges to the solution of the elliptic equation
$\nu\Laplace v=-f$, but this convergence does not in general hold in $\bR$ and $\bR^2$.

Here is an outline of the  proof of Theorem \ref{th:main-i}.
Denote by $G_{\cO}=G_{\cO}(t,x,y)$ the heat kernel for equation \eqref{HE-main-i} so that, for  $f\in \cG,$
the solution
$u^{{\mathrm{H}},f}=u^{{\mathrm{H}},f}(t,x)$  of the   deterministic heat equation
with initial condition $f$ is
\bel{D-Heat}
u^{{\mathrm{H}},f}(t,x)=\int_{\cO} G_{\cO}(t,x,y)f(y)dy.
\ee
If it exists,  the   solution of \eqref{HE-main-i} is
\bel{HE-main-i-sol}
u(t,x)=u^{{\mathrm{H}},\varphi}(t,x)+ \sigma \int_0^t \int_{\cO} G(t-s,x,y)\,W(ds,dy)
\ee
and, because $G_{\cO}(t,x,y)=G_{\cO}(t,y,x)$,
\begin{align}
\notag
u[t,f]&:=\!\int_{\cO}u(t,x)f(x) dx\!=\!u^{{\mathrm{H}},\varphi}[t,f]\!+\!
  \sigma \!\int_0^t\! \int_{\cO}\!\left(\int_{\cO}\!G_{\cO}(t-s,x,y)f(x)dx\right)W(ds,dy)\\
  \label{Mild-Gen}
& =u^{{\mathrm{H}},\varphi}[t,f]+ \sigma \int_0^t\int_{\cO} u^{{\mathrm{H}},f}(t-s,y)\,W(ds,dy);
 \end{align}
  cf. \cite[Chapter 9]{Walsh} in the case $\cO=\bRd,\ \dd\geq 3$.
As a result,
$$
\bE\Big(u[t,f]u[t,g]\Big)=\bE\Big(u^{{\mathrm{H}},\varphi}[t,f]u^{{\mathrm{H}},\varphi}[t,g]\Big)
+\sigma^2\!\int_0^{t}\!\int_{\cO}u^{{\mathrm{H}},f}(t-s,y)u^{{\mathrm{H}},g}(t-s,y)\, dy\, ds,
$$
and if  
\bel{Heat-zero}
\lim_{t\to +\infty} u^{{\mathrm{H}},f}(t,x)=0
\ee
in an appropriate way, then 
$$
\lim_{t\to +\infty} \bE\Big(u[t,f]u[t,g]\Big)=
\sigma^2\int_0^{+\infty}\int_{\cO}u^{{\mathrm{H}},f}(s,y)u^{{\mathrm{H}},g}(s,y)\, dy\, ds.
$$
Moreover, by \eqref{D-Heat} and the semigroup property of $G_{\cO}$,
$$
\int_{\cO}u^{{\mathrm{H}},f}(s,y)u^{{\mathrm{H}},g}(s,y)\, dy=
\iint\limits_{\cO\times\cO} G_{\cO}(s,x,y)f(x)g(y)\, dxdy.
$$
If we also have
\bel{Heat-Poisson}
\int_0^{+\infty}  G_{\cO}(s,x,y)\,ds=\frac{2}{\nu}\Phi_{\cO}(x,y),
\ee
then, combining the above computations with \eqref{GFF-Gen-ii}, we get the convergence
\bel{Conv000}
\lim_{t\to +\infty} \bE\Big(u[t,f]u[t,g]\Big)=\frac{2\sigma^2}{\nu} \bE\Big(\bar{W}[f]\bar{W}[g]\Big).
\ee

A major part of the paper consists in providing the details in the above arguments, in particular,
\begin{enumerate}
\item Constructing the  solution of \eqref{HE-main-i} and   interpreting \eqref{HE-main-i-sol};
\item Identifying a suitable function class $\cG$ and verifying \eqref{Mild-Gen}, \eqref{Heat-zero};
\item Working around \eqref{Heat-Poisson}: this step turns out to be a major technical difference between  a bounded domain
and the whole space;
\item Interpreting both $u$ and $\bar{W}$ as Gaussian measures on a suitable Hilbert space so that
\eqref{Conv000} will indeed imply the required convergence.
\end{enumerate}
We will also see that, similar to the deterministic problem,
 the cases $\cO=\bR$ and $\cO=\bR^2$ require special considerations,
partly because of the failure of \eqref{Heat-Poisson} and partly because of   unexpected difficulties interpreting \eqref{GFF-Gen-ii}.

Section \ref{Sec:GM} summarizes the construction and general properties of Gaussian processes indexed by elements of
a separable Hilbert space, which, in particular, provides an interpretation of the diverging series \eqref{STWN} and \eqref{CBM}.
Sections \ref{Sec:BD} and \ref{Sec:Rd}  present the precise statement and  proof of Theorem \ref{th:main-i}
 in a bounded domain and in the whole space,  respectively. Section \ref{Sec:OneD} discusses the special features of the one-dimensional
 case, and Section \ref{Sec:Sum} summarizes the results and puts them in a broader context.

The symbol $\sim$ has the same meaning as in \cite[Formula 2.1.1]{NIST}:
$$
f(x)\sim g(x),\ x\to x_0 \ \Leftrightarrow\ \lim_{x\to x_0} \frac{f(x)}{g(x)} =1.
$$

A long bar $\overline{z}$ over a symbol denoted complex conjugations; it should not be confused with a short
bar $\bar{W}$ in the notation of the Gaussian free field.

\section{Gaussian Processes and Measures on Hilbert Spaces}
\label{Sec:GM}
Let $H$ be a real separable Hilbert space with inner product $(\cdot, \cdot)_0$ and  norm $\|\cdot\|_0$, and
let $\Lambda$ be  a linear operator on $H$ with the following properties:
\begin{enumerate}
\item[[O1]\!\!] $(\Lambda f,g)_0=(f,\Lambda g)$ for all $f,g$ in the domain of $\Lambda$;
\item[[O2]\!\!] $(\Lambda f,f)_0>0,\ f\not=0$, $f$ in the domain of $\Lambda$;
\item[[O3]\!\!] There is an orthonormal basis $\{\mfk{h}_k,\ k\geq 1\}$ in $H$ such that
\bel{Eig000}
\Lambda \mfk{h}_k=\lambda_k \mfk{h}_k,\ k\geq 1;\ 0<\lambda_1\leq \lambda_2\leq \lambda_3\leq \cdots; \ \lim_{k\to \infty}
k^{-\alpha} \lambda_k =c_{\Lambda}
\ee
for some $\alpha>0$, $c_{\Lambda}>0$.
\end{enumerate}

For $f\in H$, write
$$
f_k=(f,\mfk{h}_k)_0.
$$
\begin{definition}
\label{def-HS}
 The {\tt Hilbert scale} $\mathbb{H}_{\Lambda}$ generated by the operator $\Lambda$ is the collection of
 the Hilbert spaces $\{ H^{\gamma},\ \gamma\in \bR\}$, where
\begin{itemize}
\item $H^0=H$;
\item $H^{\gamma}=\{f\in H: \sum_{k\geq 1} \lambda_k^{2\gamma} f_k^2<\infty\}$ if  $\gamma>0$;
\item $H^{\gamma}$ is the closure of $H$ with respect to the norm $\|f\|_{\gamma}$, where
\bel{Norm000}
\|f\|_{\gamma}^{2}=\sum_{k\geq 1} \lambda_k^{2\gamma} f_k^2,
\ee
if $\gamma<0$.
\end{itemize}
\end{definition}

Equality \eqref{Norm000} defines the norm in every $H^{\gamma}$, $\gamma\in \bR$,
$$
H^{\gamma}=\Lambda^{-\gamma}H,\ (f,g)_{\gamma}=(\Lambda^{\gamma} f, \Lambda^{\gamma} g)_0
=\sum_{k=1}^{\infty}\lambda_k^{2\gamma}f_kg_k,
$$
and
$$
f=\sum_{k=1}^{\infty} f_k\mfk{h}_k\in H^{\gamma} \ \Longleftrightarrow\ \sum_{k=1}^{\infty} k^{2\alpha\gamma}f_k^2<\infty.
$$
\begin{proposition}
\label{HS-gen}
If $\mathbb{H}_{\Lambda}=\{H^{\gamma},\ \gamma\in \bR\}$ is the Hilbert scale from Definition \ref{def-HS}, then,
for every $\gamma_1>\gamma_2$, the space $H^{\gamma_1}$ is densely and compactly embedded  into $H^{\gamma_2}$;
the embedding is Hilbert-Schmidt if $\gamma_1-\gamma_2>1/(2\alpha)$.
\end{proposition}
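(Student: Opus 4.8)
The plan is to reduce the whole statement to the spectral calculus of a single diagonal operator. Writing $f_k=(f,\mfk{h}_k)_0$, the correspondence $f\mapsto(f_k)_{k\ge1}$ identifies $H^{\gamma}$ isometrically with the weighted sequence space of $(a_k)$ satisfying $\sum_k\lambda_k^{2\gamma}a_k^2<\infty$. Under this identification the vectors $e_k^{(\gamma)}:=\lambda_k^{-\gamma}\mfk{h}_k$ form an orthonormal basis of $H^{\gamma}$, since a direct computation from \eqref{Norm000} gives $(\mfk{h}_j,\mfk{h}_k)_{\gamma}=\lambda_k^{2\gamma}\delta_{jk}$. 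The inclusion $\iota\colon H^{\gamma_1}\to H^{\gamma_2}$ is then the diagonal operator sending $e_k^{(\gamma_1)}$ to $\mu_k\,e_k^{(\gamma_2)}$ with entries $\mu_k:=\lambda_k^{\gamma_2-\gamma_1}$, and all three assertions become elementary statements about the $\mu_k$. The only point requiring care is that for $\gamma<0$ the space $H^{\gamma}$ is a completion rather than a subspace of $H$, so one must first confirm that the sequence-space identification, the basis $\{e_k^{(\gamma)}\}$, and the diagonal form of $\iota$ all persist in the completion; this is routine but is where I would be most careful.

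For continuity (and hence that $\iota$ is genuinely an embedding), I would use that $\gamma_2-\gamma_1<0$ and $\lambda_k\ge\lambda_1>0$ by \eqref{Eig000}, so that $\mu_k\le\lambda_1^{\gamma_2-\gamma_1}$ uniformly in $k$, whence
$$
\|f\|_{\gamma_2}\le\lambda_1^{\gamma_2-\gamma_1}\,\|f\|_{\gamma_1},\qquad f\in H^{\gamma_1}.
$$
Density is immediate: the finite linear combinations of the $\mfk{h}_k$ lie in $H^{\gamma_1}$ and are dense in $H^{\gamma_2}$, since the partial sums of $f=\sum_k f_k\mfk{h}_k$ converge to $f$ in $\|\cdot\|_{\gamma_2}$. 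For compactness I would combine $\lambda_k\to\infty$ (from $\lambda_k\sim c_{\Lambda}k^{\alpha}$ with $\alpha>0$) with $\gamma_2-\gamma_1<0$ to get $\mu_k\to0$; a diagonal operator whose entries tend to zero is the operator-norm limit of its finite-rank truncations, with truncation error $\sup_{k>N}|\mu_k|\to0$, and is therefore compact.

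Finally, for the Hilbert–Schmidt property I would evaluate the Hilbert–Schmidt norm directly in the basis $\{e_k^{(\gamma_1)}\}$:
$$
\|\iota\|_{\mathrm{HS}}^2=\sum_{k\ge1}\bigl\|\iota\,e_k^{(\gamma_1)}\bigr\|_{\gamma_2}^2=\sum_{k\ge1}\mu_k^2=\sum_{k\ge1}\lambda_k^{2(\gamma_2-\gamma_1)}.
$$
By \eqref{Eig000}, $\lambda_k^{2(\gamma_2-\gamma_1)}\sim c_{\Lambda}^{2(\gamma_2-\gamma_1)}\,k^{2\alpha(\gamma_2-\gamma_1)}$ as $k\to\infty$, so by comparison with the $p$-series this sum converges precisely when $2\alpha(\gamma_1-\gamma_2)>1$, that is, when $\gamma_1-\gamma_2>1/(2\alpha)$. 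This yields the stated sufficient condition (indeed an equivalence, though only one direction is needed here). Overall there is no deep obstacle: once the sequence-space reduction is set up and the completion bookkeeping is handled, everything follows from the asymptotics of $\lambda_k$ applied to a diagonal operator.
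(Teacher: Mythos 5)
Your argument is correct and is essentially the paper's own proof written out in full: the paper's version is a two-line sketch that attributes density to the construction of $\mathbb{H}_{\Lambda}$ and compactness plus the Hilbert--Schmidt criterion $\sum_k\lambda_k^{2(\gamma_2-\gamma_1)}<\infty$ to the eigenvalue asymptotics \eqref{Eig000}, and your diagonal-operator reduction with $\mu_k=\lambda_k^{\gamma_2-\gamma_1}$ is exactly the computation being alluded to. The extra care you flag about the completion for $\gamma<0$ is a worthwhile addition, but it does not change the route.
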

\begin{proof}
The construction of $\mathbb{H}_{\Lambda}$ implies  density of the embedding, whereas
assumption \eqref{Eig000} about the eigenvalues of $\Lambda$ implies that the embedding is compact and,
as long as $\sum_k \lambda_k^{2(\gamma_2-\gamma_1)}<\infty$,
it is  Hilbert-Schmidt.
\end{proof}

 \begin{definition}
 \label{def:CBM}
 Let $U$ be a separable Hilbert space with inner product $(\cdot, \cdot)_U$.
 \begin{enumerate}
\item  A {\tt $Q$-Brownian motion} $W=W(t)$ on $U$ is a collection of zero-mean Gaussian processes $\{W[t,h],\ h\in H,\ t\geq 0\}$
 such that $\bE\Big(W[t,h]W[s,g]\Big)=\min(t,s)\,(Qh,g)_{U}$ for some linear operator $Q$ on $U$. In the case $Q$ is the
 identity operator, $W$ is called a {\tt cylindrical Brownian motion} on $U$.
 \item A {\tt $Q$-Brownian motion} $W=W(t)$ on $U$ is called $U$-valued if
 \bel{CBM-val}
 W[t,h]=\big(W(t),h\big)_U
 \ee
 and the process $W$ on the right-hand side of \eqref{CBM-val} satisfies
  $$
  W\in L_2\Big(\Omega; \cC\big((0,T); U\big)\Big)
  $$
  for all $T>0$.
 \end{enumerate}

  \end{definition}

A $Q$-Brownian motion on $U$ is $U$-valued if and only if the operator $Q$ is trace class on $U$;
cf.  \cite[Propositions 4.3 and 4.4]{DaPr1}.
It is convenient to re-state \cite[Proposition 4.7]{DaPr1} in the setting of the Hilbert scale $\mathbb{H}_{\Lambda}$.

 \begin{proposition}
 \label{prop:CBM}
 A cylindrical Brownian motion   on $H$ has a representation
 \bel{CBM0}
 W(t)=\sum_{k\geq 1} \mfk{h}_k w_k(t),
 \ee
 where $w_k(t)=W[t,\mfk{h}_k],\ k\geq1,$ are independent standard Brownian motions, and
 $W\in L_2\Big(\Omega; \cC((0,T); H^{-\gamma})\Big)$ for all $T>0,\ \gamma>1/(2\alpha)$. Equivalently,
 a  cylindrical Brownian motion  $H$ is an $H^{-\gamma}$-valued
 $Q$-Gaussian process,  $\gamma>1/(2\alpha)$, with $Q=\mfk{j}\mfk{j}'$, where
 $\mfk{j}$ is the embedding operator $H\to H^{-\gamma}$ and $\mfk{j}': H^{-\gamma}\to H$ is the  adjoint of $\mfk{j}$.
 \end{proposition}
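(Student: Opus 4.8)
The plan is to build the process explicitly from the one-dimensional Brownian motions it encodes, and then read off both its regularity and its covariance. First I would set $w_k(t):=W[t,\mfk{h}_k]$ and use the defining covariance $\bE\big(W[t,h]W[s,g]\big)=\min(t,s)(h,g)_0$ of a cylindrical Brownian motion (the case $Q=\bI$ in Definition \ref{def:CBM}) to compute $\bE\big(w_k(t)w_j(s)\big)=\min(t,s)\delta_{kj}$. Since the family $\{W[t,h]\}$ is jointly Gaussian, this identifies each $w_k$ as a standard Brownian motion and shows the $w_k$ are pairwise uncorrelated, hence independent. The same bilinearity of the covariance gives $\bE\big(W[t,h]-\sum_{k\le N}h_kw_k(t)\big)^2=t\sum_{k>N}h_k^2\to 0$, so that $W[t,h]=\sum_k h_kw_k(t)$ in $L_2(\Omega)$ for every $h\in H$; this is the content of the representation \eqref{CBM0}.

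Next I would promote \eqref{CBM0} to an identity in $\cC([0,T];H^{-\gamma})$. Writing $W^{(N)}(t)=\sum_{k\le N}\mfk{h}_kw_k(t)$, the coordinate of $W(t)-W^{(N)}(t)$ along $\mfk{h}_k$ is $w_k(t)$ for $k>N$, so by \eqref{Norm000} one has $\|W(t)-W^{(N)}(t)\|_{-\gamma}^2=\sum_{k>N}\lambda_k^{-2\gamma}w_k(t)^2$. Bounding the supremum over $[0,T]$ termwise and applying Doob's $L_2$ maximal inequality to each $w_k$ yields
$$
\bE\sup_{0\le t\le T}\|W(t)-W^{(N)}(t)\|_{-\gamma}^2\le \sum_{k>N}\lambda_k^{-2\gamma}\,\bE\sup_{0\le t\le T}w_k(t)^2\le 4T\sum_{k>N}\lambda_k^{-2\gamma}.
$$
By \eqref{Eig000} one has $\lambda_k^{-2\gamma}\sim c_{\Lambda}^{-2\gamma}k^{-2\alpha\gamma}$, so $\sum_k\lambda_k^{-2\gamma}<\infty$ precisely when $\gamma>1/(2\alpha)$; in that range the right-hand side tends to $0$, the partial sums $W^{(N)}$ are Cauchy in the Banach space $L_2\big(\Omega;\cC([0,T];H^{-\gamma})\big)$, and their limit is the asserted continuous $H^{-\gamma}$-valued process.

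Finally I would identify the covariance operator. For $a=\sum_k a_k\mfk{h}_k$ and $b=\sum_k b_k\mfk{h}_k$ in $H^{-\gamma}$, expanding $(W(t),a)_{-\gamma}=\sum_k\lambda_k^{-2\gamma}w_k(t)a_k$ and using $\bE\big(w_k(t)w_j(s)\big)=\min(t,s)\delta_{kj}$ gives $\bE\big((W(t),a)_{-\gamma}(W(s),b)_{-\gamma}\big)=\min(t,s)\sum_k\lambda_k^{-4\gamma}a_kb_k$. Matching this against $\min(t,s)(Qa,b)_{-\gamma}$ forces $Q$ to be diagonal with $Q\mfk{h}_k=\lambda_k^{-2\gamma}\mfk{h}_k$. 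Computing the adjoint of the embedding $\mfk{j}\colon H\to H^{-\gamma}$ from $(\mfk{j}h,a)_{-\gamma}=(h,\mfk{j}'a)_0$ gives $\mfk{j}'a=\sum_k\lambda_k^{-2\gamma}a_k\mfk{h}_k$, whence $\mfk{j}\mfk{j}'=Q$; since $\mathrm{tr}\,Q=\sum_k\lambda_k^{-2\gamma}<\infty$ for $\gamma>1/(2\alpha)$, the operator $Q$ is trace class and $W$ is genuinely $H^{-\gamma}$-valued, in agreement with \cite[Proposition 4.7]{DaPr1}.

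The step I expect to be the main obstacle is the second one: upgrading the fixed-time $L_2(\Omega)$-convergence of \eqref{CBM0} to convergence uniform in $t$ with continuous paths. Exchanging the supremum with the infinite sum and controlling the tails uniformly — rather than the algebra of the covariance — is where the eigenvalue asymptotics \eqref{Eig000} and the threshold $\gamma>1/(2\alpha)$ genuinely enter, and it is what makes $H^{-\gamma}$, rather than $H$ itself, the correct state space.
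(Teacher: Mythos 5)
Your proof is correct, and it is essentially the argument behind the result the paper relies on: the paper gives no proof of Proposition \ref{prop:CBM}, simply re-stating \cite[Proposition 4.7]{DaPr1} in the Hilbert-scale setting, and your computation (covariance identifies the $w_k$ as independent standard Brownian motions; Doob's maximal inequality plus $\sum_k\lambda_k^{-2\gamma}<\infty$ for $\gamma>1/(2\alpha)$ makes the partial sums Cauchy in $L_2\big(\Omega;\cC([0,T];H^{-\gamma})\big)$; diagonal identification of $Q=\mfk{j}\mfk{j}'$) is exactly the standard proof of that cited fact. The only cosmetic point is that the display $\|W(t)-W^{(N)}(t)\|_{-\gamma}^2=\sum_{k>N}\lambda_k^{-2\gamma}w_k(t)^2$ presumes $W(t)\in H^{-\gamma}$ before it has been constructed; the estimate should be stated for differences $W^{(M)}-W^{(N)}$ of partial sums, which is what your Cauchy-sequence conclusion actually uses.
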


We will also need a stationary version of Definition \ref{def:CBM}.

 \begin{definition}
\label{def:IGP}
Let $U$ be a separable Hilbert space with inner product $(\cdot, \cdot)_U$.
 \begin{enumerate}
\item A {\tt $Q$-Gaussian process} $W$ on $U$ is a collection of zero-mean Gaussian random variables $\{W[h],\ h\in H\}$
 such that $\bE\Big(W[h]W[g]\Big)=(Qh,g)_U$ for some linear operator $Q$ on $U$. In the case $Q$ is the
 identity operator, $W$ is called an {\tt isonormal Gaussian process}; {\rm cf. \cite[Definition 1.1.1]{Nualart}}.
 \item A {\tt $Q$-Gaussian process} $W$ on $U$ is called $U$-valued if
 \bel{IGP-val}
 W[h]=\big(W,h\big)_U
 \ee
 and the random variable  $W$ on the right-hand side of \eqref{IGP-val} satisfies
  $W\in L_2\big(\Omega;U)\big)$.
 \end{enumerate}
 \end{definition}

 A $Q$-Gaussian process on $U$ is $U$-valued if and only if the operator $Q$ is
 trace class on $U$; cf. \cite[Theorem 3.2.39]{LR-SPDE}. In the  Hilbert scale $\mathbb{H}_{\Lambda}$, we  have
 a  version of Proposition \ref{prop:CBM}.

 \begin{proposition}
 \label{prop:IGP}
 Given an $r\in \bR$, an  isonormal Gaussian process on $H^r$ has a representation
 $$
 W=\sum_{k\geq 1}\lambda_k^{-r} \mfk{h}_k \zeta_k,
$$
 where $\zeta_k=W[\mfk{h}_k],\ k\geq1,$ are iid  Gaussian random variables, and
 $W\in L_2(\Omega; H^{r-\gamma})$ for all $\gamma>1/(2\alpha)$. Equivalently, an  isonormal Gaussian process on
 $H^r$ is an $H^{r-\gamma}$-valued
 $Q$-Gaussian process for every  $\gamma>1/(2\alpha)$, with $Q=\mfk{j}\mfk{j}'$, where
 $\mfk{j}$ is the embedding operator $H^r\to H^{r-\gamma}$ and $\mfk{j}': H^{r-\gamma}\to H^r$ is its adjoint.
 \end{proposition}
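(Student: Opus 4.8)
The plan is to follow the proof of Proposition \ref{prop:CBM} almost verbatim, deleting the time variable and replacing each Brownian motion $w_k(t)$ by a single standard Gaussian. First I would record the elementary fact that $\{\mfk{e}_k:=\lambda_k^{-r}\mfk{h}_k,\ k\geq 1\}$ is an orthonormal basis of $H^r$: since $(\mfk{h}_j,\mfk{h}_k)_r=\sum_l \lambda_l^{2r}(\mfk{h}_j)_l(\mfk{h}_k)_l=\lambda_k^{2r}\delta_{jk}$, the $\lambda_k^{-r}$ rescaling yields an orthonormal system, complete because $\{\mfk{h}_k\}$ is complete in $H$. Writing $\zeta_k:=W[\mfk{e}_k]$, the defining covariance of the isonormal process (that is, $Q=I$ on $H^r$) gives $\bE(\zeta_j\zeta_k)=(\mfk{e}_j,\mfk{e}_k)_r=\delta_{jk}$, so the $\zeta_k$ are i.i.d.\ standard Gaussians; by linearity and Parseval, $W[h]=\sum_k (h,\mfk{e}_k)_r\,\zeta_k$ for every $h\in H^r$, the series converging in $L_2(\Omega)$ with covariance $\bE(W[h]W[g])=(h,g)_r$.

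Next I would show that the $H^{r-\gamma}$-valued series $W=\sum_k \zeta_k\mfk{e}_k=\sum_k \lambda_k^{-r}\zeta_k\mfk{h}_k$ converges in $L_2(\Omega;H^{r-\gamma})$ exactly when $\gamma>1/(2\alpha)$. Since $\{\mfk{h}_k\}$ is orthogonal in $H^{r-\gamma}$ with $\|\mfk{h}_k\|_{r-\gamma}^2=\lambda_k^{2(r-\gamma)}$, the partial sums are Cauchy in $L_2(\Omega;H^{r-\gamma})$ iff $\sum_k \bE(\zeta_k^2)\,\lambda_k^{-2r}\lambda_k^{2(r-\gamma)}=\sum_k\lambda_k^{-2\gamma}$ is finite, and by the eigenvalue asymptotics $\lambda_k\sim c_{\Lambda}k^{\alpha}$ from [O3] this holds iff $2\alpha\gamma>1$. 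This is precisely the summability already exploited in Proposition \ref{HS-gen}, so the convergence is equivalent to the embedding $\mfk{j}\colon H^r\to H^{r-\gamma}$ being Hilbert--Schmidt.

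Finally I would identify the covariance operator of the $H^{r-\gamma}$-valued random variable $W$. For $a,b\in H^{r-\gamma}$ the i.i.d.\ structure of the $\zeta_k$ gives $\bE\big((W,a)_{r-\gamma}(W,b)_{r-\gamma}\big)=\sum_k(\mfk{e}_k,a)_{r-\gamma}(\mfk{e}_k,b)_{r-\gamma}$. Computing the adjoint in coordinates as $(\mfk{j}'a)_k=\lambda_k^{-2\gamma}a_k$, one checks that this sum equals $(\mfk{j}'a,\mfk{j}'b)_r=(\mfk{j}\mfk{j}'a,b)_{r-\gamma}$, whence $W$ is an $H^{r-\gamma}$-valued $Q$-Gaussian process with $Q=\mfk{j}\mfk{j}'$; trace-classness of $Q$ is exactly the Hilbert--Schmidt condition on $\mfk{j}$ from the previous step, in agreement with the criterion quoted after Definition \ref{def:IGP}.

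The one point requiring care is keeping the two Hilbert-space structures straight: the process is isonormal (covariance $I$) on the \emph{index} space $H^r$, yet becomes an honest random element of the larger \emph{realization} space $H^{r-\gamma}$ carrying the nontrivial trace-class covariance $\mfk{j}\mfk{j}'$. The main obstacle is therefore bookkeeping rather than analysis: one must track which inner product appears at each stage, compute the adjoint $\mfk{j}'\colon H^{r-\gamma}\to H^r$ correctly, and confirm that the pairing recovering $W[h]$ from the $H^{r-\gamma}$-valued $W$ is consistent with both descriptions. Everything else reduces to the scalar Gaussian computation above and the eigenvalue estimate already established in Proposition \ref{HS-gen}.
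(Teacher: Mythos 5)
Your proposal is correct and follows exactly the route of the paper's (one-line) proof: observe that $\{\lambda_k^{-r}\mfk{h}_k\}$ is an orthonormal basis of $H^r$ and then carry out the direct computation, which you write out in full (convergence in $H^{r-\gamma}$ via $\sum_k\lambda_k^{-2\gamma}<\infty$ and identification of $Q=\mfk{j}\mfk{j}'$ in coordinates). Your normalization $\zeta_k=W[\lambda_k^{-r}\mfk{h}_k]$ is in fact the cleaner reading of the statement, matching the representation \eqref{GFF-bnd000} with standard Gaussians.
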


\begin{proof}
This follows by direct computation after observing that the collection
$$
\{\lambda_k^{-r} \mfk{h}_k,\ k\geq 1\}
$$
 is  an orthonormal basis in $H^r$.
\end{proof}

\begin{remark}
\label{rem:dual}
 {\rm While every Hilbert space is self-dual, there is an alternative notion of duality in a Hilbert scale $\mathbb{H}_{\Lambda}$: for every
$\gamma_0\in \bR $ and every $\gamma>0$, the spaces $H^{\gamma_0+\gamma}$ and $H^{\gamma_0-\gamma}$ are dual relative to the
inner product in $H^{\gamma_0}$; the duality $\langle \cdot,\cdot\rangle_{\gamma_0,\gamma}$ is given by
\bel{dual-rg}
f\in H^{\gamma_0+\gamma}, g\in H^{\gamma_0-\gamma} \ \mapsto \langle f,g \rangle_{\gamma_0,\gamma}=
\lim_{n\to \infty} (f,g_n)_{\gamma_0},
\ee
where $g_n \in H^{\gamma_0}$ and $\lim_{n\to \infty} \|g-g_n\|_{\gamma_0-\gamma}=0$.
 With respect to $ \langle \cdot,\cdot\rangle_{0,|r|}$ duality, an isonormal Gaussian process on $H^r$ from
Proposition \ref{prop:IGP}  becomes an isonormal Gaussian process on $H^{-r}$.
Indeed, if  $r>0$, then, for $f\in H^{-r},$ we {\em define}
$$
\langle W, f\rangle_{0,r} = \sum_{k=1}^{\infty} \frac{f_k}{\lambda_k^r} \, \zeta_k
$$
so that
$$
\bE \Big( \langle W, f\rangle_{0,r}  \langle  W, g \rangle_{0,r} \Big) = \sum_{k=1}^{\infty} \frac{f_kg_k}{\lambda_k^{2r}}=
(f,g)_{-r}.
$$
The case $r<0$ is similar. }
\end{remark}

\begin{remark}
{\rm Let $V$ be an isonormal Gaussian process on $H$.
By direct computation, an isonormal Gaussian process  $W$ on $H^r$ is the unique
solution of the stochastic elliptic equation
\bel{ell-gen}
\Lambda^{r/2} W = V;
\ee
cf. \cite[Theorem 4.2.2]{LR-SPDE}. }
\end{remark}

By the Bochner-Minlos theorem \cite[Theorem 2.27]{DaPr2},
 a $U$-valued $Q$-Gaussian process $W$ defines a centered Gaussian measure $\mu_W$ on $U$ by
$$
\mu_W(A)=\mathbb{P}(W\in A),
$$
where $A$ is a Borel sub-set of $U$, and, for every $f\in U$,
$$
\int_U e^{\mfi (f,g)_{U}}\, d\mu_W(g)=\mathbb{E}e^{\mfi (W,f)_{U}}=\exp\left(-\frac{1}{2}(Qf,f)_U\right).
$$
The {\em Cameron-Martin space} of the measure $\mu_W$ is the collection of all $h\in U$ such that the
measure  $\mu_W^h$ defined by $\mu_W^h(A)=\mu_W(A+h)$ is equivalent to $\mu_W$ \cite[Section 2.4]{Bogachev-GM}.

\begin{proposition}
\label{prop:GM-HS}
Let $\mathbb{H}_{\Lambda}$ be the Hilbert scale from  Definition \ref{def-HS}.
If $W$ is an isonormal Gaussian process on $H^r$, then $W$ generates a Gaussian measure $\mu_W$ on every
$H^{r-\gamma}$ with $\gamma>1/(2\alpha)$, and the Cameron-Martin space of this measure is $H^r$.
\end{proposition}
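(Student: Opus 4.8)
The plan is to verify the two claims separately: first that $W$ induces a genuine (countably additive) Gaussian measure on $H^{r-\gamma}$ whenever $\gamma>1/(2\alpha)$, and second that the Cameron--Martin space of that measure is exactly $H^r$. For the first claim I would lean on Proposition \ref{prop:IGP}, which already identifies the isonormal process $W$ on $H^r$ with an $H^{r-\gamma}$-valued $Q$-Gaussian process for every $\gamma>1/(2\alpha)$, with covariance operator $Q=\mfk{j}\mfk{j}'$ where $\mfk{j}\colon H^r\to H^{r-\gamma}$ is the embedding. By the characterization recalled just before Definition \ref{def:IGP} (a $Q$-Gaussian process on $U$ is $U$-valued iff $Q$ is trace class on $U$), it suffices to check that $Q=\mfk{j}\mfk{j}'$ is trace class on $H^{r-\gamma}$. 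Using the orthonormal basis $\{\lambda_k^{-(r-\gamma)}\mfk{h}_k\}$ of $H^{r-\gamma}$ from the proof of Proposition \ref{prop:IGP}, a direct computation gives the eigenvalues of $Q$, and the trace is $\sum_k \lambda_k^{-2\gamma}$; by the eigenvalue asymptotics \eqref{Eig000} this sum behaves like $\sum_k k^{-2\alpha\gamma}$, which converges precisely when $\gamma>1/(2\alpha)$. Then the Bochner--Minlos theorem, as invoked above, produces the centered Gaussian measure $\mu_W$ on $H^{r-\gamma}$ with characteristic functional $\exp(-\tfrac12(Qf,f)_{r-\gamma})$.

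For the second claim I would compute the Cameron--Martin space of $\mu_W$ directly from its covariance structure. The general principle, for a centered Gaussian measure on a Hilbert space $U$ with trace-class covariance $Q$, is that the Cameron--Martin space is $Q^{1/2}U$ equipped with the norm $\|h\|_{CM}^2=(Q^{-1}h,h)_U$; cf. \cite[Section 2.4]{Bogachev-GM}. So I would diagonalize $Q$ on $U=H^{r-\gamma}$, extract $Q^{1/2}$, and identify its range. The eigenvalues of $Q$ relative to the orthonormal basis $\{\lambda_k^{-(r-\gamma)}\mfk{h}_k\}$ are $\lambda_k^{-2\gamma}$, so $Q^{1/2}$ scales the $k$-th basis coefficient by $\lambda_k^{-\gamma}$. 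Tracking how these scalings interact with the $H^{r-\gamma}$ and $H^r$ norms (whose defining weights differ by exactly $\lambda_k^{-2\gamma}$ per \eqref{Norm000}), I expect the Cameron--Martin norm to coincide with $\|\cdot\|_r$ and the range $Q^{1/2}H^{r-\gamma}$ to coincide with $H^r$ as sets.

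The main obstacle, and the step that needs care rather than cleverness, is bookkeeping the three different inner products ($H^r$, $H^{r-\gamma}$, and the Cameron--Martin inner product) consistently under the two relevant orthonormal bases, $\{\lambda_k^{-r}\mfk{h}_k\}$ for $H^r$ and $\{\lambda_k^{-(r-\gamma)}\mfk{h}_k\}$ for $H^{r-\gamma}$, so that the operator $Q=\mfk{j}\mfk{j}'$ is expressed in the correct basis before its square root is taken. Once the coefficient weights are aligned, the identification $Q^{1/2}H^{r-\gamma}=H^r$ with matching norms should fall out as an equality of weighted $\ell_2$ spaces, and the proposition follows.
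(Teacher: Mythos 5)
Your proposal is correct and follows essentially the same route as the paper: the paper's proof simply cites \cite[Lemma 2.1.4 and Theorem 3.5.1]{Bogachev-GM}, which are precisely the two facts you verify by hand, namely that the covariance operator $Q=\mfk{j}\mfk{j}'$ is diagonal with eigenvalues $\lambda_k^{-2\gamma}$ (hence trace class on $H^{r-\gamma}$ exactly when $\gamma>1/(2\alpha)$) and that the Cameron--Martin space is $Q^{1/2}H^{r-\gamma}=\Lambda^{-\gamma}H^{r-\gamma}=H^r$ with matching norms. Your explicit coefficient bookkeeping checks out, so no changes are needed.
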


\begin{proof} This is a combination of two results, \cite[Lemma 2.1.4 and Theorem 3.5.1]{Bogachev-GM},
in the Hilbert space setting.
\end{proof}

\section{Bounded Domain in $\bRd$}
\label{Sec:BD}

Let $\cO$ be a bounded domain in $\bRd$ and let $\Laplace$ be the Laplacian on $\cO$ with some homogeneous boundary conditions
so that
\begin{enumerate}
\item[[A1\!\!]] The eigenfunction $\mfk{h}_k,\ k\geq 1,$ of $\Laplace$ form an orthonormal basis in $L_2(\cO)$;
\item[[A2\!\!]] The eigenvalues $-\lambda_k^2,\ k\geq 1,$ of $\Laplace$  satisfy $0<\lambda_1<\lambda_2\leq \lambda_3\leq \cdots$, and
there exists a number  $c_{\cO}>0$ such that
\bel{Weyl0}
\lambda_k\sim c_{\cO}k^{1/\dd}.
\ee

\end{enumerate}
There are various sufficient conditions ensuring [A1] and [A2]: see, for example,   \cite[Section 1.1.7]{SafVas}.

Taking $H=L_2(\cO)$ and $\Lambda=(-\Laplace)^{1/2}$, we see that conditions [O1]--[O3] hold, with  $\alpha=1/\dd$,
and  we construct the  Hilbert scale $\mathbb{H}_{\Lambda}$ as in Definition \ref{def-HS}.
In particular,
$$
f=\sum_{k=1}^{\infty} f_k\mfk{h}_k \in H^{\gamma} \ \ \Longleftrightarrow \ \ \sum_{k=1}^{\infty} k^{2\gamma/\dd}f_k^2<\infty.
$$

\subsection{Green's Functions and Gaussian Free Fields}
For $\nu>0$, consider the heat equation
\bel{HE-bd0}
u_t(t,x)= f(x)+\nu \int_0^t \Laplace u(s,x)\, ds ,\ t\geq 0,\ x\in \cO,
\ee
and the  Poisson equation
\bel{PE-bd0}
\nu \Laplace v(x)=-g(x), \ \ x\in \cO.
\ee
Writing
$$
f(x)=\sum_{k=1}^{\infty}f_k\mfk{h}_k(x), \ g=\sum_{k=1}^{\infty} g_k\mfk{h}_k(x),\ u(t,x)=\sum_{k=1}^{\infty}
 u_k(t)\mfk{h}_k(x),\ v(x)=\sum_{k=1}^{\infty} v_k\mfk{h}_k(x),
$$
 we can solve equations \eqref{HE-bd0} and \eqref{PE-bd0}.

 \begin{proposition}
 \label{prop-bnd-det}
\begin{enumerate}
\item For every $f\in H^{\gamma}$, the unique solution of \eqref{HE-bd0} is
$$
u(t,x)= \sum_{k=1}^{\infty} e^{-\lambda_k^2\nu t} f_k \mfk{h}_k(x)= \int_{\cO}G_{\cO}(t,x,y) f(y)\, dy,
$$
where
$$
G_{\cO}(t,x,y)=\sum_{k=1}^{\infty} e^{-\lambda_k^2 \nu t} \mfk{h}_k(x)\mfk{h}_k(y).
$$
The operator norm of the heat semigroup
\bel{HS-bnd}
S_t: f\mapsto \int_{\cO}G_{\cO}(t,x,y) f(y)\, dy
\ee
is decaying exponentially in time on every $H^{\gamma}\!:$
\bel{HT-S-bnd}
\|S_tf\|_{\gamma} \leq e^{-\lambda_1 t} \|f\|_{\gamma}.
\ee
\item For every $g\in H^{\gamma}$, the unique solution of \eqref{PE-bd0}  is
$$
v(x)=  \sum_{k=1}^{\infty}\frac{g_k}{\lambda_k^2\nu}\, \mfk{h}_k(x)=\int_{\bRd} \Phi_{\cO}(x,y) g(y)\, dy,
$$
where
$$
\Phi_{\cO}(x,y)=\sum_{k=1}^{\infty}
 \frac{\mfk{h}_k(x)\mfk{h}_k(y)}{\lambda_k^2\nu}=\int_0^{+\infty}G_{\cO}(t,x,y) \, dt.
$$
In particular, equality \eqref{Heat-Poisson} holds.
\end{enumerate}
\end{proposition}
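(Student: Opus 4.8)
The plan is to \emph{diagonalize} both equations in the orthonormal eigenbasis $\{\mfk{h}_k\}$ supplied by [A1], reducing each PDE to a decoupled family of scalar problems for the Fourier coefficients $u_k(t)=(u(t),\mfk{h}_k)_0$ and $v_k=(v,\mfk{h}_k)_0$. The only structural facts I will need are the self-adjointness of $\Laplace$ (condition [O1], so that $(\Laplace w,\mfk{h}_k)_0=-\lambda_k^2 w_k$) and the Weyl asymptotics \eqref{Weyl0}, which control the convergence of all the series that appear.

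For part (1), pairing \eqref{HE-bd0} with $\mfk{h}_k$ turns it into the scalar Volterra equation $u_k(t)=f_k-\nu\lambda_k^2\int_0^t u_k(s)\,ds$, equivalently the initial value problem $u_k'(t)=-\nu\lambda_k^2 u_k(t)$ with $u_k(0)=f_k$, whose unique solution is $u_k(t)=e^{-\nu\lambda_k^2 t}f_k$. Reassembling gives $u(t,x)=\sum_k e^{-\nu\lambda_k^2 t}f_k\mfk{h}_k(x)$; for $t>0$ the factors $e^{-\nu\lambda_k^2 t}$ decay fast enough, by \eqref{Weyl0}, to dominate the growth of the $\mfk{h}_k$, so I may substitute $f_k=\int_\cO f(y)\mfk{h}_k(y)\,dy$ and interchange sum and integral to read off the kernel representation with $G_\cO(t,x,y)=\sum_k e^{-\nu\lambda_k^2 t}\mfk{h}_k(x)\mfk{h}_k(y)$. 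For the decay estimate \eqref{HT-S-bnd} I would compute $\|S_tf\|_\gamma^2=\sum_k\lambda_k^{2\gamma}e^{-2\nu\lambda_k^2 t}f_k^2$ and bound each exponential factor by its largest value, attained at the bottom of the spectrum $k=1$; pulling this factor out of the sum leaves exactly $\|f\|_\gamma^2$, which is the assertion.

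For part (2), the same pairing sends \eqref{PE-bd0} to the algebraic relation $-\nu\lambda_k^2 v_k=-g_k$, so $v_k=g_k/(\nu\lambda_k^2)$ and $v=\tfrac1\nu\Lambda^{-2}g\in H^{\gamma+2}$, well defined and unique since $\lambda_1>0$. The kernel representation with $\Phi_\cO(x,y)=\sum_k \mfk{h}_k(x)\mfk{h}_k(y)/(\nu\lambda_k^2)$ follows as before. The identity $\Phi_\cO=\int_0^{+\infty}G_\cO\,dt$, and hence \eqref{Heat-Poisson}, is then just the term-by-term Laplace transform $\int_0^{+\infty}e^{-\nu\lambda_k^2 t}\,dt=1/(\nu\lambda_k^2)$; at the operator level this is the statement $\int_0^{+\infty}S_t\,dt=(-\nu\Laplace)^{-1}$, the value of the resolvent at zero, whose convergence is guaranteed by the exponential decay from part (1).

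\textbf{The main obstacle} I anticipate is not the algebra but the \emph{interchange of summation and integration}, which is delicate precisely because, as already noted after \eqref{GFF-Gen}, the series defining $\Phi_\cO(x,y)$ diverges on the diagonal $x=y$ when $\dd\geq2$. Consequently the identity $\Phi_\cO=\int_0^{+\infty}G_\cO\,dt$ cannot be read pointwise there, and I would instead interpret every kernel identity after pairing with test functions $f,g\in H^\gamma$ (equivalently, as operator identities in the Hilbert scale $\mathbb{H}_\Lambda$), where absolute convergence holds and Fubini applies. For $t>0$ the heat kernel series itself converges rapidly, so the time integral is harmless away from $t=0$; the only genuine subtlety is the near-diagonal behavior as $t\to0$, and this is exactly why the conclusion is phrased through the series, or through the operator $\Lambda^{-2}$, rather than through pointwise values of $\Phi_\cO$.
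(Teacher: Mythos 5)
Your proposal is correct and is essentially the argument the paper intends: the proposition is stated immediately after the observation that expanding $f$, $g$, $u$, $v$ in the eigenbasis $\{\mfk{h}_k\}$ reduces \eqref{HE-bd0} and \eqref{PE-bd0} to decoupled scalar problems, and the paper supplies no further proof beyond that diagonalization. One small point: your computation of \eqref{HT-S-bnd} actually yields the factor $e^{-\nu\lambda_1^2 t}$ rather than the $e^{-\lambda_1 t}$ printed in the statement; the decay is still exponential, so nothing downstream is affected, but the two bounds coincide only when $\nu\lambda_1\geq 1$.
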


\begin{definition}
\label{def:GFF-bnd}
The $(\Laplace,\cO)$-Gaussian free field $\bar{W}$ is an isonormal Gaussian process on $H^1$.
\end{definition}

The point is that, in a bounded domain $\cO$, there are many different Gaussian free fields, depending on the
boundary conditions of the operator $\Laplace$. For example, with {\em zero boundary conditions}, we take
$f,g\in H^1$ and integrate by parts to find
$$
\bE\big(\bar{W}[f]\bar{W}[g]\big)=
(f,g)_1=(\Lambda f,\Lambda g)_0=-(f,\Laplace g)_0= -\int_{\cO}
f(x)\Laplace g(x)\, dx = (\nabla f,\nabla g)_0,
$$
which, for $\dd=2$, is the same as  \cite[Definition 2.12]{S-GFF}. More generally, by
\eqref{ell-gen},
$$
(-\Laplace)^{1/2}\bar{W}=V,
$$
where  $V$ is an isonormal Gaussian process on $L_2(\cO)$.

\begin{proposition}
\label{prop:GFF-bnd}
Under the assumptions {\rm [A1], [A2]}, the  $(\Laplace,\cO)$-Gaussian free field $\bar{W}$ has a representation
\bel{GFF-bnd000}
\bar{W}(x)=\sum_{k=1}^{\infty} \frac{\zeta_k}{\lambda_k}\, \mfk{h}_k(x),
\ee
with iid standard Gaussian random variables $\zeta_k$, and defines a centered Gaussian measure on
$H^{1-(\dd/2)-\varepsilon}$ for every $\varepsilon>0$; the Cameron-Martin space of this measure is $H^1$.
\end{proposition}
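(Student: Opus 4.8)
The plan is to read this proposition off the abstract Hilbert-scale results of Section \ref{Sec:GM}, specialized to the present setting $H = L_2(\cO)$, $\Lambda = (-\Laplace)^{1/2}$, $\alpha = 1/\dd$, and index $r = 1$. By Definition \ref{def:GFF-bnd} the field $\bar{W}$ is, by definition, an isonormal Gaussian process on $H^1$; all three assertions then follow by substituting $r = 1$ into Propositions \ref{prop:IGP} and \ref{prop:GM-HS} and translating the abstract threshold $1/(2\alpha)$ into the concrete exponent $\dd/2$.

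First I would produce the series \eqref{GFF-bnd000}. Proposition \ref{prop:IGP} with $r = 1$ furnishes the expansion $\bar{W} = \sum_{k \geq 1} \lambda_k^{-1}\mfk{h}_k\zeta_k$, which is exactly \eqref{GFF-bnd000}. The coefficients are the coordinates of $\bar{W}$ in the orthonormal basis $\{\lambda_k^{-1}\mfk{h}_k,\ k \geq 1\}$ of $H^1$; that this collection is orthonormal in $H^1$ is the computation $(\lambda_j^{-1}\mfk{h}_j, \lambda_k^{-1}\mfk{h}_k)_1 = \lambda_j^{-1}\lambda_k^{-1}\lambda_j^2\delta_{jk} = \delta_{jk}$, using $(\mfk{h}_j,\mfk{h}_k)_1 = \lambda_j^2\delta_{jk}$. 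Consequently $\zeta_k = \bar{W}[\lambda_k^{-1}\mfk{h}_k]$ satisfy $\bE(\zeta_j\zeta_k) = \delta_{jk}$ and, being jointly Gaussian, are iid standard normal.

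Next I would obtain the measure and its Cameron-Martin space directly from Proposition \ref{prop:GM-HS} with $r = 1$. Because $\alpha = 1/\dd$ here, the abstract condition $\gamma > 1/(2\alpha)$ reads $\gamma > \dd/2$; writing $\gamma = \dd/2 + \varepsilon$ with $\varepsilon > 0$ then shows that $\bar{W}$ generates a centered Gaussian measure on each $H^{1 - \dd/2 - \varepsilon}$, and the same proposition identifies the Cameron-Martin space as $H^r = H^1$. The single point worth checking carefully is that this threshold is the sharp one dictated by the Weyl asymptotics \eqref{Weyl0}: the series converges in $L_2(\Omega; H^{1-\dd/2-\varepsilon})$ precisely because $\bE\|\bar{W}\|_{1-\dd/2-\varepsilon}^2 = \sum_{k\geq 1}\lambda_k^{-\dd - 2\varepsilon}$ and $\lambda_k \sim c_{\cO}k^{1/\dd}$ make this a convergent $p$-series with exponent $1 + 2\varepsilon/\dd > 1$. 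There is no genuine obstacle beyond this bookkeeping, since the heavy lifting — the Bochner--Minlos construction of $\mu_W$ and the Cameron--Martin identification — has already been carried out abstractly in Section \ref{Sec:GM}.
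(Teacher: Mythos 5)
Your proof is correct and follows exactly the paper's route: the paper's own proof is the one-line observation that the result follows from Proposition \ref{prop:IGP} (and, implicitly, Proposition \ref{prop:GM-HS}) with $r=1$ and $\alpha=1/\dd$. Your added details --- the orthonormality of $\{\lambda_k^{-1}\mfk{h}_k\}$ in $H^1$, the translation of $\gamma>1/(2\alpha)$ into $\gamma>\dd/2$, and the convergence check $\bE\|\bar{W}\|^2_{1-\dd/2-\varepsilon}=\sum_k\lambda_k^{-\dd-2\varepsilon}<\infty$ via the Weyl asymptotics --- are all accurate bookkeeping that the paper leaves implicit.
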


\begin{proof} This follows from Proposition \ref{prop:IGP} with $r=1$ and $ \alpha=1/\dd$.
\end{proof}

\subsection{Main Result}

Given $\nu>0$, $\sigma>0$, and a cylindrical Brownian motion $W$ on $L_2(\cO)$,
consider the evolution equation
\bel{HE-main-bd}
u(t)=\varphi+\nu \int_0^t\Laplace u(s)\, ds +\sigma{W}(t),\ t>0,
\ee
with  initial condition $\varphi$ independent of $W$.

\begin{definition}
Given $\varphi\in L_2(\Omega; H^r)$, a solution of \eqref{HE-main-bd} is an adapted process with values in
$ L_2\big(\Omega\times [0,T];H^{r+1}\big)\bigcap L_2\big(\Omega; \cC((0,T);H^{r}\big)$,  such that
equality \eqref{HE-main-bd} holds in $H^{r-1}$ for all $t\geq 0$ with probability one.
\end{definition}

\begin{theorem}
\label{th:main-bd}
If $\varphi\in L_2(\Omega; H^{-\gamma})$ and $\gamma>\dd/2$, then,
under assumptions {\rm [A1], [A2]}, equation
\eqref{HE-main-bd} has a unique solution and, for every $T>0$,
\bel{HES-bnd-est0}
\bE\sup_{0<t<T} \|u(t)\|_{-\gamma}^2 +
\bE \int_0^T \|u(t)\|_{1-\gamma}^2\, dt \leq C(\gamma, T)(1+\bE\|\varphi\|^2_{-\gamma});
\ee
$C=C(\gamma, T)$ is a number depending only on $T$ and $\gamma$.
Moreover,
\begin{enumerate}
\item For every $t>0$, $u(t)\in L_2\big(\Omega; H^{1-\gamma}\big)$ and
\bel{HES-bnd-mild}
u(t)=S_t \varphi + \sum_{k=1}^{\infty} \bar{u}_k(t) \mfk{h}_k,
\ee
where $S_t$ is the heat semigroup \eqref{HS-bnd} and $\bar{u}_k(t),\ k\geq 1,$ are independent Gaussian
random variables with mean zero and variance
\bel{HES-bnd-var}
\bE \bar{u}_k^2(t)=\frac{\sigma^2}{2\nu\lambda_k^2}\Big(1-e^{-2\nu\lambda_k^2 t}\Big).
\ee
\item As $t\to +\infty$, the $H^{1-\gamma}$-valued random variables $u(t)$
converge weakly to $\sigma(2\nu)^{-1/2}\bar{W}$, where $\bar{W}$ is the $(\Laplace,\cO)$-Gaussian free field.
\end{enumerate}
\end{theorem}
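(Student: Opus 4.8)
The plan is to diagonalize \eqref{HE-main-bd} in the eigenbasis $\{\mfk{h}_k\}$ of $\Laplace$. Writing $u(t)=\sum_{k\geq1}u_k(t)\mfk{h}_k$, $\varphi=\sum_{k\geq1}\varphi_k\mfk{h}_k$ and $W(t)=\sum_{k\geq1}\mfk{h}_kw_k(t)$, projection onto $\mfk{h}_k$ reduces \eqref{HE-main-bd} to the scalar Langevin equation $u_k(t)=\varphi_k-\nu\lambda_k^2\int_0^tu_k(s)\,ds+\sigma w_k(t)$, whose unique solution is $u_k(t)=e^{-\nu\lambda_k^2t}\varphi_k+\bar{u}_k(t)$ with $\bar{u}_k(t)=\sigma\int_0^te^{-\nu\lambda_k^2(t-s)}\,dw_k(s)$. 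Summing the first terms recovers $S_t\varphi$ through Proposition \ref{prop-bnd-det}, which yields the representation \eqref{HES-bnd-mild}; the It\^o isometry gives the variance \eqref{HES-bnd-var}, and the $\bar{u}_k$ are independent because the $w_k$ are and the system is diagonal. Uniqueness is immediate, since any two solutions share the same modes.

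To obtain well-posedness in the stated spaces and the estimate \eqref{HES-bnd-est0}, I would set the problem in the variational (Gelfand-triple) framework $H^{1-\gamma}\hookrightarrow H^{-\gamma}\hookrightarrow H^{-1-\gamma}$. The identity $(\Laplace f,f)_{-\gamma}=-\|f\|_{1-\gamma}^2$ furnishes coercivity, and the hypothesis $\gamma>\dd/2$ is exactly the condition under which the embedding $H^0=L_2(\cO)\hookrightarrow H^{-\gamma}$ is Hilbert--Schmidt (Proposition \ref{HS-gen} with $\alpha=1/\dd$), so that the cylindrical noise $W$ determines a genuine $H^{-\gamma}$-valued process of finite trace $\sum_{k\geq1}\lambda_k^{-2\gamma}<\infty$. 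The time-integral part of \eqref{HES-bnd-est0} is then routine: the It\^o isometry and Fubini bound the stochastic contribution by $\tfrac{\sigma^2T}{2\nu}\sum_{k\geq1}\lambda_k^{-2\gamma}$ and the deterministic contribution by $\tfrac{1}{2\nu}\bE\|\varphi\|_{-\gamma}^2$.

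The supremum bound $\bE\sup_{0<t<T}\|u(t)\|_{-\gamma}^2$ is the main obstacle. For the deterministic part, $\|S_t\varphi\|_{-\gamma}\leq\|\varphi\|_{-\gamma}$ by \eqref{HT-S-bnd}. For the stochastic convolution $\bar{u}(t)=\sum_{k\geq1}\bar{u}_k(t)\mfk{h}_k$ I would apply It\^o's formula to $\|\bar{u}(t)\|_{-\gamma}^2$ in the triple, obtaining
\[
\|\bar{u}(t)\|_{-\gamma}^2=-2\nu\int_0^t\|\bar{u}(s)\|_{1-\gamma}^2\,ds+\sigma^2t\sum_{k\geq1}\lambda_k^{-2\gamma}+2\sigma\int_0^t\sum_{k\geq1}\lambda_k^{-2\gamma}\bar{u}_k(s)\,dw_k(s),
\]
and then estimate the martingale term by the Burkholder--Davis--Gundy inequality; its quadratic variation $4\sigma^2\int_0^T\sum_{k\geq1}\lambda_k^{-4\gamma}\bar{u}_k(s)^2\,ds$ has finite expectation because $\gamma>\dd/2$. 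This simultaneously delivers the sup-bound and the path continuity needed for membership in $L_2(\Omega;\cC((0,T);H^{-\gamma}))$; the factorization method would be an alternative route.

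For item (2) I would work with Gaussian measures on $H^{1-\gamma}$, where both $u(t)$ and $\bar{W}$ live (take $\varepsilon=\gamma-\dd/2$ in Proposition \ref{prop:GFF-bnd}). The deterministic part vanishes: combining the smoothing bound $\|S_t\varphi\|_{1-\gamma}^2\leq(2\nu e t)^{-1}\|\varphi\|_{-\gamma}^2$ with \eqref{HT-S-bnd} gives $S_t\varphi\to0$ in $L_2(\Omega;H^{1-\gamma})$, hence in distribution. The centered Gaussian $\bar{u}(t)$ has covariance operator $Q_t$ diagonal in the orthonormal basis $\{\lambda_k^{-(1-\gamma)}\mfk{h}_k\}$ of $H^{1-\gamma}$, with eigenvalues $\tfrac{\sigma^2}{2\nu}\lambda_k^{-2\gamma}(1-e^{-2\nu\lambda_k^2t})$ by \eqref{HES-bnd-var}; these increase to $\tfrac{\sigma^2}{2\nu}\lambda_k^{-2\gamma}$, and dominated convergence (again using $\sum_{k\geq1}\lambda_k^{-2\gamma}<\infty$) yields $\|Q_t-Q_\infty\|_{\mathrm{tr}}\to0$. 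Since trace-norm convergence of covariance operators implies weak convergence of centered Gaussian measures on a separable Hilbert space, and since $Q_\infty$ is exactly the covariance of $\sigma(2\nu)^{-1/2}\bar{W}$ --- as one checks by comparing eigenvalues with the representation \eqref{GFF-bnd000} --- the convergence follows. The whole argument is driven by $\gamma>\dd/2$, and the only genuinely delicate step is the maximal estimate of the third paragraph.
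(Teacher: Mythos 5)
Your proposal is correct and follows essentially the same route as the paper: diagonalization in the eigenbasis of $\Laplace$, scalar Ornstein--Uhlenbeck equations for the modes $u_k$, the It\^o-isometry computation of \eqref{HES-bnd-var}, and identification of the limiting covariance with that of $\sigma(2\nu)^{-1/2}\bar{W}$ on $H^{1-\gamma}$. The only differences are presentational: you prove the a priori estimate \eqref{HES-bnd-est0} by hand (It\^o formula plus Burkholder--Davis--Gundy in the Gelfand triple $H^{1-\gamma}\subset H^{-\gamma}\subset H^{-\gamma-1}$) where the paper simply invokes \cite[Theorem 3.1]{Roz}, and you justify the final weak convergence via trace-norm convergence of the covariance operators, which is a more explicit version of the paper's mode-by-mode independence argument.
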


\begin{proof}
The first part of the theorem follows directly from \cite[Theorem 3.1]{Roz} after
the identifications
$$
A=\nu \Laplace, \  \mathbb{X}=H^{1-\gamma}, \
\mathbb{H}=H^{-\gamma}, \ \mathbb{X}'=H^{-\gamma-1},\ M(t)=W(t),
$$
because, by Proposition \ref{prop:CBM},
$$
W\in  L_2\big(\Omega; \cC((0,T);H^{-\gamma}\big),\ \gamma>\frac{\dd}{2}.
$$
To establish \eqref{HES-bnd-mild}, we write
$$
u(t)=\sum_{k=1}^{\infty} u_k(t)\mfk{h}_k
$$
and combine \eqref{HE-main-bd} with \eqref{CBM0} to get
$$
u_k(t)=\varphi_k- \nu\lambda_k^2 \int_0^t u_k(s)\, ds + \sigma w_k(t);
$$
recall that $\Laplace \mfk{h}_k=-\lambda_k^2 \mfk{h}_k$. Then
$$
u_k(t) = \varphi_ke^{-\nu\lambda_k^2t}+\bar{u}_k(t),
$$
where
$$
\bar{u}_k(t)=\sigma \int_0^t e^{-\nu \lambda_k^2(t-s)}dw_k(s).
$$
Next,
$$
\bE\bar{u}_k^2(t)=\sigma^2 \int_0^t e^{-2\nu \lambda_k^2(t-s)}\, ds,
$$
 and \eqref{HES-bnd-mild} follows. In particular,
\bel{Reg-gm+1-b}
 \bE\|u(t)\|_{1-\gamma}^2\leq \frac{\bE\|\varphi\|_{-\gamma}^2}{\nu t}+\frac{\sigma^2}{2\nu}
 \sum_{k=1}^{\infty} \lambda_k^{-2\gamma},
 \ee
 so that
 \bel{Reg-gm+1}
 u(t)\in L_2(\Omega; H^{1-\gamma}),\ \  t>0.
 \ee
 Note that \eqref{Reg-gm+1-b} cannot be used to establish  \eqref{HES-bnd-est0},
 whereas \eqref{HES-bnd-est0} does not necessarily imply  \eqref{Reg-gm+1}.

 Finally, \eqref{HES-bnd-var} implies that, as $t\to +\infty$,  each $\bar{u}_k(t)$ converges in distribution to
 $\sigma(2\nu)^{-1/2}(\zeta_k/\lambda_k)$, and $\zeta_k,\ k\geq 1,$ are iid standard Gaussian random variables.
 By \eqref{HT-S-bnd} and independence of $\bar{u}_k(t)$ for different $k$,  the process $u(t)$ converges in distribution to the $H^{1-\gamma}$-valued
 Gaussian random variable
 $$
 \bar{W}=\frac{\sigma}{\sqrt{2\nu}}\sum_{k=1}^{\infty} \frac{\zeta_k}{\lambda_k}\, \mfk{h}_k,
 $$
 which, by Proposition \ref{prop:GFF-bnd}, concludes the proof of the theorem.
\end{proof}

\begin{corollary}
\label{cor:main-bd}
\begin{enumerate}
\item  Equation \eqref{HE-main-bd} is ergodic and the unique invariant measure is the distribution of $\sigma(2\nu)^{-1/2}\bar{W}$ on $H^{1-\gamma}$.
\item If $\varphi\wc \sigma(2\nu)^{-1/2}\bar{W}$, then $u(t)\wc \sigma(2\nu)^{-1/2}\bar{W}$ for all $t>0$.
\item If $\bE \varphi_k=0$ for all $k$, then, for each $t>0$, the measure generated by $u(t)$ on $H^{1-\gamma}$ is absolutely continuous
with respect to the measure generated by $\sigma(2\nu)^{-1/2}\bar{W}$.
\end{enumerate}
\end{corollary}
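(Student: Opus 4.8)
The plan is to exploit the coordinate decoupling in \eqref{HES-bnd-mild}: writing $N_t:=\sum_{k\ge1}\bar u_k(t)\,\mfk{h}_k$, we have $u(t)=S_t\varphi+N_t$, where $N_t$ is Gaussian and independent of $\varphi$, and each mode $u_k(t)=\varphi_k e^{-\nu\lambda_k^2 t}+\bar u_k(t)$ is an independent one-dimensional Ornstein--Uhlenbeck process whose noise part $\bar u_k(t)$ has variance \eqref{HES-bnd-var}. For part (2) I would match variances mode by mode: if $\varphi\wc\sigma(2\nu)^{-1/2}\bar W$, then the $\varphi_k$ are independent centered Gaussians of variance $\sigma^2/(2\nu\lambda_k^2)$, and since $\varphi_k$ is independent of $\bar u_k(t)$ the mode $u_k(t)$ is centered Gaussian of variance $e^{-2\nu\lambda_k^2 t}\,\sigma^2/(2\nu\lambda_k^2)+\sigma^2(1-e^{-2\nu\lambda_k^2 t})/(2\nu\lambda_k^2)=\sigma^2/(2\nu\lambda_k^2)$; together with independence across $k$ this is exactly the law of $\sigma(2\nu)^{-1/2}\bar W$ in \eqref{GFF-bnd000}. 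For part (1), invariance of the law $\mu_\infty$ of $\sigma(2\nu)^{-1/2}\bar W$ is precisely part (2); for uniqueness I would invoke Theorem \ref{th:main-bd}(2), by which the law of $u(t)$ converges weakly to $\mu_\infty$ from every admissible initial condition, so any invariant measure $\mu'$ both stays equal to $\mu'$ and converges to $\mu_\infty$, forcing $\mu'=\mu_\infty$; uniqueness of the invariant measure then yields ergodicity. (In applying this to an invariant $\mu'$ one uses $\|S_t u(0)\|_{1-\gamma}\le e^{-\lambda_1 t}\|u(0)\|_{1-\gamma}\to0$ from \eqref{HT-S-bnd} to kill the initial datum.)

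Part (3) is the main step, and I would split it into a covariance comparison and a shift argument. First compare the centered Gaussian measures $\mathcal L(N_t)$ and $\mu_\infty$ on $H^{1-\gamma}$. In the orthonormal basis $\{\lambda_k^{\gamma-1}\mfk{h}_k\}$ of $H^{1-\gamma}$ both covariance operators are diagonal, with eigenvalues $a_k=\frac{\sigma^2}{2\nu}\lambda_k^{-2\gamma}(1-e^{-2\nu\lambda_k^2 t})$ for $\mathcal L(N_t)$ and $b_k=\frac{\sigma^2}{2\nu}\lambda_k^{-2\gamma}$ for $\mu_\infty$, the latter summable precisely because $\gamma>\dd/2$ by \eqref{Weyl0}. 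Since $a_k/b_k=1-e^{-2\nu\lambda_k^2 t}\in(0,1)$ and $\sum_k(a_k/b_k-1)^2=\sum_k e^{-4\nu\lambda_k^2 t}<\infty$ for every $t>0$ (again by \eqref{Weyl0}, as $\lambda_k\to\infty$), the Feldman--H\'ajek/Kakutani criterion shows that $\mathcal L(N_t)$ and $\mu_\infty$ are equivalent; in particular they share the Cameron--Martin space $H^1$ of Proposition \ref{prop:GFF-bnd}.

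It remains to absorb the random shift $S_t\varphi$. Using the strong smoothing of the heat semigroup I would estimate, for each fixed $t>0$, $\|S_t\varphi\|_1^2=\sum_k\lambda_k^2 e^{-2\nu\lambda_k^2 t}\varphi_k^2\le\bigl(\sup_k\lambda_k^{2+2\gamma}e^{-2\nu\lambda_k^2 t}\bigr)\|\varphi\|_{-\gamma}^2<\infty$ almost surely, the supremum being finite since $\lambda_k\to\infty$ and $\varphi\in L_2(\Omega;H^{-\gamma})$; hence $S_t\varphi$ lies almost surely in the common Cameron--Martin space $H^1$. Conditioning on $\varphi$, the Cameron--Martin theorem then gives $\mathcal L(u(t)\mid\varphi)\ll\mathcal L(N_t)$, hence $\mathcal L(u(t)\mid\varphi)\ll\mu_\infty$, for almost every $\varphi$; integrating the conditional Radon--Nikodym densities against the law of $\varphi$ (the resulting density being $\mu_\infty$-integrable, since its integral is one by Fubini) yields $\mathcal L(u(t))\ll\mu_\infty$. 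The hypothesis $\bE\varphi_k=0$ makes $\bE u(t)=S_t\bE\varphi=0$, so $u(t)$ has mean zero, matching $\mu_\infty$. The main obstacle is exactly this third part: pinning down the Feldman--H\'ajek condition with the explicit eigenvalues and, above all, controlling the random and generally non-Gaussian shift $S_t\varphi$ so that Cameron--Martin applies $\mathbb P$-almost surely and the absolute continuity is preserved upon integrating over $\varphi$.
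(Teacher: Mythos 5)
Your proposal is correct, and for parts (1) and (2) and the core of part (3) it follows the same route as the paper: the mode decomposition \eqref{HES-bnd-mild} gives invariance and the fixed-$t$ identity in law directly, and the paper also settles part (3) by Kakutani's dichotomy with exactly your ratio, $m_k=n_k\bigl(1-e^{-2\nu\lambda_k^2 t}\bigr)^{1/2}$ (stated there for standard deviations rather than variances, which is equivalent since the ratios tend to $1$). Where you go beyond the paper is in the treatment of the initial condition: the paper's one-line proof compares only the two centered Gaussian product measures, i.e.\ effectively the law of $N_t=\sum_k\bar u_k(t)\mfk{h}_k$ against that of $\sigma(2\nu)^{-1/2}\bar W$, and leaves the contribution of $S_t\varphi$ implicit. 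Since the hypothesis is only $\bE\varphi_k=0$ (so $\varphi$ need not be Gaussian and $\mathcal L(u(t))$ is in general a mixture of Gaussians), your extra step --- showing $\|S_t\varphi\|_1^2\le\bigl(\sup_k\lambda_k^{2+2\gamma}e^{-2\nu\lambda_k^2t}\bigr)\|\varphi\|_{-\gamma}^2<\infty$ a.s., so that the shift lands in the common Cameron--Martin space $H^1$, then conditioning on $\varphi$ and integrating the resulting equivalences --- is exactly what is needed to make the argument complete, and it correctly yields absolute continuity (rather than equivalence, which can fail for mixtures). The only cosmetic point is that the centering hypothesis $\bE\varphi_k=0$ is not actually used in your Cameron--Martin argument; it serves only to match the paper's framing of Kakutani's theorem for zero-mean measures.
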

  \begin{proof}
  The first two statements are an immediate consequence of  \eqref{HES-bnd-mild}.
  The third statement follows from a theorem of Kakutani \cite[Example 2.7.6]{Bogachev-GM}:
  two zero-mean Gaussian product measures are equivalent if and only if the corresponding {\em standard deviations}
  $m_k$, $n_k$ satisfy
  $$
  \sum_{k=1}^{\infty} \left(\frac{m_k}{n_k}-1\right)^2<\infty:
  $$
  in our case,
  $$
  m_k=n_k\big(1-e^{-2\nu\lambda_k^2 t}\big)^{1/2}.
  $$
\end{proof}

\section{The whole space $\bRd$}
\label{Sec:Rd}
There are two  special features of the bounded domain that are absent in the whole space:
\begin{itemize}
\item The operator $\Lambda$ generating the scale $\mathbb{H}_{\Lambda}$ commutes with the operator $\Laplace$ in the equations
\eqref{HE-bd0} and \eqref{PE-bd0} we want to solve, and has the property that $\Lambda^{-\gamma}$ is Hilbert-Schmidt on $H$
 for sufficiently large $\gamma>0$;
\item The assumption $\lambda_1>0$ ensures  \eqref{HT-S-bnd}, that is,  the operator norm of the heat semigroup decays exponentially in time.
\end{itemize}

As a result, despite its simple form, equation \eqref{HE-main-i} in $\bRd$ is not covered by such standard references as
\cite{Kr_Lp1} (because of the structure of the noise) and \cite{DpZ-CBM} (because of the particular form of the evolution operator).
Accordingly, we study  \eqref{HE-main-i} in $\bRd$ by  combining very general results from \cite{DaPr1} and
\cite{Roz} with very specific  computations  using \eqref{HE-main-i-sol}.

\subsection{Function Spaces} There are three families of spaces that appear in the analysis of partial
differential equations on $\bRd$:
\begin{enumerate}
\item {\tt Homogeneous Sobolev spaces}  $\dot{H}^{\gamma}$, $\gamma\in \bR$,   the collection of generalized
functions $f\in \cS'(\bRd)$ such that the Fourier transform $\hat{f}=\hat{f}(\xi)$ of $f$ is locally integrable and
\bel{Hom-SSP}
\|f\|^2_{\dot{H}^{\gamma}}:=\int_{\bRd} |\xi|^{2\gamma} |\hat{f}(\xi)|^2\, d\xi<\infty;
\ee
when $\gamma<\dd/2$, $\dot{H}^{\gamma}$ is also known as the {\em Riesz potential} space \cite{Samko-Riesz};
\item {\tt Nonhomogenous  Sobolev, or Bessel potential,  spaces}  $H^{\gamma}$, $\gamma\in \bR$,   the collection of  generalized
functions $f\in \cS'(\bRd)$ such that the Fourier transform $\hat{f}=\hat{f}(\xi)$ of $f$ is locally
square integrable and
\bel{SSP}
\|f\|^2_{H^{\gamma}}:=\int_{\bRd} (\varepsilon+|\xi|^{2})^{\gamma} |\hat{f}(\xi)|^2\, d\xi<\infty;
\ee
\item The Hilbert scale ${\mathbb{H}}_{\tilde{\Lambda}}=\{\tilde{H}^{\gamma},\ \gamma\in \bR\},$
constructed  according to Definition \ref{def-HS} with  $H=L_2(\bRd)$ and $\tilde{\Lambda}$ defined by
\bel{HH-op}
\tilde{\Lambda}^2: f(x)\mapsto -\Laplace f(x) + |x|^2f(x),\ f\in \cS(\bRd).
\ee
 The operator $\tilde{\Lambda}^2$ has pure point spectrum
so that \eqref{Eig000} holds with $\alpha=1/(2\dd)$,
and the   eigenfunctions, known as the Hermite functions, form an orthonormal basis in $L_2(\bRd)$; cf.
\cite[Section 1.5]{GJ} or \cite[Example 4.2]{Walsh}.
\end{enumerate}

Recall that the {\em normalized Hermite polynomials} are
$$
\mathrm{H}_n(x) = \frac{(-1)^n}{\pi^{1/4}2^{n/2}(n!)^{1/2}}\,e^{x^2}\frac{d^n}{dx^n} e^{-x^2}, \ \ n=0,1,2,\ldots;
$$
 the  {\em Hermite functions}
$$
h_n(x)=e^{-x^2/2}\mathrm{H}_n(x)
$$
form an orthonormal basis in $L_2(\bR)$ and satisfy
$$
-h_n''(x)+x^2h_n(x)=(2n+1)h_n(x).
$$
The  orthonormal basis in  $L_2(\bRd)$,
$$
h_{\mathbf{n}}(x_1,\ldots, x_{\dd})=\prod_{j=1}^{\dd} h_{n_j}(x_j),
$$
 is indexed by  $\mathbf{n}=(n_1, \ldots, n_{\dd})$, $n_j=0,1,2,\ldots$ so that
 $$
 \tilde{\Lambda}^2h_{\mathbf{n}}=\lambda_{\mathbf{n}}^2h_{\mathbf{n}}=\big(2(n_1+\cdots+n_{\dd}) + \dd\big)h_{\mathbf{n}}.
 $$
 A non-decreasing  ordering  of  $\lambda_{\mathbf{n}}^2$ brings us to  the setting of Definition \ref{def-HS}.  In particular,
 $$
 \lambda_n^2\sim (2\dd!)^{1/\dd}\, n^{1/\dd},
 $$
 cf. \cite[Theorem 30.1]{Shubin-PDO}, and
$$
f=\sum_{k=1}^{\infty} f_k\mfk{h}_k \in \tilde{H}^{\gamma} \ \ \Longleftrightarrow \ \ \sum_{k=1}^{\infty} k^{\gamma/\dd}f_k^2<\infty.
$$

The norms \eqref{SSP} are equivalent for different $\varepsilon>0$ and \eqref{Hom-SSP} is a formal limit of
\eqref{SSP} as $\varepsilon \to 0$.
We could interpret \eqref{Hom-SSP} and \eqref{SSP} as
$$
\dot{H}^{\gamma}=\dot{\Lambda}^{-\gamma} L_2(\bRd),\ \
H^{\gamma}=\Lambda^{-\gamma}L_2(\bRd),
$$
with
$$
\dot{\Lambda} =(-\Laplace)^{1/2}: \hat{f}(\xi)\ \mapsto |\xi|  \hat{f}(\xi),\ \
 {\Lambda} = (\varepsilon-\Laplace)^{1/2}: \hat{f}(\xi)\ \mapsto (\varepsilon+|\xi|^2)^{1/2} \hat{f}(\xi),
 $$
 but it is still not possible to construct the   scales as in  Definition \ref{def-HS}:
 the operators $\dot{\Lambda}$ and $\Lambda$ do not have a pure point spectrum, and, in addition,
 the spaces $\dot{H}^{\gamma}$ are complete with respect to the norm $\|\bullet\|_{\dot{H}^{\gamma}}$
 if and only if $\gamma<\dd/2$ \cite[Proposition 1.3.4]{Chemin-PDEs}. In particular, $\dot{H}^1$ is not a Hilbert
 space when $\dd=1,2$.

 It follows from the definitions that $H^{\gamma} \subset \dot{H}^{\gamma}$ and $\tilde{H}^{\gamma}\subset H^{\gamma}$
for $\gamma>0$, and $\dot{H}^{\gamma} \subset {H}^{\gamma}$ for $\gamma<0$. Also, by duality,
$H^{\gamma}\subset\tilde{H}^{\gamma}$ for $\gamma<0$. To summarize,
\bel{SSP-incl}
\begin{cases}
\tilde{H}^{\gamma}\subset H^{\gamma} \subset \dot{H}^{\gamma},&  \gamma>0,\\
\tilde{H}^{0}= H^{0}= \dot{H}^{0}=L_2(\bRd),& \gamma=0,\\
\dot{H}^{\gamma} \subset {H}^{\gamma}\subset\tilde{H}^{\gamma}, & \gamma<0.
\end{cases}
\ee

One of the technical difficulties in studying equation \eqref{HE-main-i} on $\bRd$ is that, while
the spaces  $\dot{H}^{\gamma}$ and $H^{\gamma}$ are ``custom-made'' for the operator $\Laplace$,
the cylindrical
Brownian motion $W=W(t)$ on $L_2(\bRd)$ does not belong to any of those space, even though
we do have $\psi W(t)\in H^{-\gamma}$, $\gamma>\dd/2$ for every $t>0$ and every
smooth function $\psi$ with compact support \cite[Proposition 9.5]{Walsh}. On the other hand,
by Proposition \ref{prop:CBM}, we have
\bel{CBM-Rd}
W\in L_2\big(\Omega;\cC((0,T);\tilde{H}^{-\gamma}\big),\ T>0,
\ee
for every $\gamma>\dd$, meaning that the basic existence/uniqueness result for \eqref{HE-main-i} must be
established in   $\tilde{H}^{\gamma}$. Another useful feature of the spaces $\tilde{H}^{\gamma}$ is the equalities
$$
\cS(\bRd)=\bigcap_{\gamma} \tilde{H}^{\gamma},\ \ \cS'(\bRd)=\bigcup_{\gamma} \tilde{H}^{\gamma};
$$
cf. \cite{BS-Sch}.

\begin{definition}
\label{Def-GFF-Rd}
The {\tt Gaussian free field}  $\bar{W}$ on $\bRd$, $\dd\geq 3$, is   an isonormal Gaussian process on $\dot{H}^1$.
  The {\tt Euclidean free field of mass $\sqrt{\varepsilon}$}  is an isonormal Gaussian process  $\bar{W}_{\varepsilon}$ on  $H^1$.
\end{definition}
We also denote by   $\tilde{W}$  an isonormal Gaussian process on $\tilde{H}^1$.

To state a  definition of   $\bar{W}$   that works for all $\dd$,
denote by $\cS_0(\bRd)$ the collection of functions from $\cS(\bRd)$ for which the Fourier transform is equal to zero near the origin.

\begin{definition}
\label{def:GFF-12}
The  {\tt Gaussian free field}  $\bar{W}$ on $\bRd$, $\dd\geq 1$, is a collection of zero-mean Gaussian random variables
$\bar{W}[f]$, $f\in \cS_0(\bRd)$ such that
\bel{eq:GFF-12}
\mathbb{E}\Big(\bar{W}[f]\bar{W}[g]\Big)=
\int_{\bRd} \frac{\hat{f}(\xi)\overline{\hat{g}(\xi)}}{|\xi|^2}\, d\xi.
\ee
\end{definition}
In the language of quantum field theory \cite[p. 103]{GJ}, construction of a zero-mass free field $(\varepsilon=0)$
  in dimensions one and two requires  different sets of test functions.

For $\dd\geq 3$,  Definitions \ref{Def-GFF-Rd} and \ref{def:GFF-12}
    are equivalent. Indeed,
 the space  $\cS_0(\bRd)$ is dense in $\dot{H}^{\gamma}$ for $\gamma<\dd/2$ \cite[Proposition 1.35]{Chemin-PDEs}
 and, for $|\gamma|<\dd/2$, the spaces $\dot{H}^{\gamma}$ and $\dot{H}^{-\gamma}$ are dual relative to the
 inner product of $L_2(\bRd)$ \cite[Proposition 1.36]{Chemin-PDEs}. Thus, if $\dd\geq 3$, then  the isonormal Gaussian process
   on $\dot{H}^{1}$ satisfies \eqref{eq:GFF-12} with an
   interpretation of $\bar{W}[f]$ as duality  relative to  $L_2(\bRd)$
   (as opposed to inner product in $\dot{H}^{1}$; cf. Remark \ref{rem:dual}).

Definitions  \ref{def:GFF-12} is also consistent with  \eqref{GFF-Gen}.
Indeed,  the function $\xi\mapsto |\xi|^{-2}$  is a homogeneous distribution in $\cS'(\bRd)$ and, for $\dd\not=2$, the
Fourier transform of this distribution is the fundamental solution of the Poission equation on $\bRd$; cf. \cite[Chapter 32]{Donoghue-FTransf}.
When $\dd=2$, there are some issues with uniqueness, which can be resolved, for example, by restricting the set of test
functions to $\cS_0(\bRd)$.

Finally, by \eqref{ell-gen}, if $V$ is an isonormal Gaussian process on $L_2(\bRd)$, then
$$
(-\Laplace)^{1/2}\bar{W}=V,\ \ (\varepsilon-\Laplace)^{1/2}\bar{W}_{\varepsilon}=V,\ \
\tilde{\Lambda}\tilde{W}=V.
$$

\subsection{Deterministic Equations and Fundamental Solutions}
For $\nu>0, \varepsilon\geq 0$ and $f\in \mathcal{S}(\bRd)$, consider the heat equation
\bel{HE0}
u_t(t,x)= \nu \Laplace u(t,x)-\varepsilon u(t,x),\ t>0,\ x\in \bRd,
\ee
with initial condition $u(0,x)=f(x)$,
and the   Poisson equation
\bel{PE0}
\nu \Laplace v(x)-\varepsilon v(x)=-g(x), \ \ x\in \bRd.
\ee
The number $\varepsilon>0$ in $\bRd$  is the analog of $\lambda_1>0$ in the bounded domain.

Below is a summary of the well-known results.
\begin{itemize}
\item The unique solution of \eqref{HE0} in $\mathcal{S}(\bRd)$ is
$$
u(t,x)=  \int_{\bRd}G_{\varepsilon,\dd}(t,x) f(y)\, dy,
$$
where
\bel{GF-H}
G_{\varepsilon,\dd}(t,x)=\frac{1}{(4\pi\nu t)^{\dd/2}}\exp\left(-\varepsilon t -\frac{|x|^2}{4 \nu t}\right);
\ee
cf. \cite[Theorem 8.4.2]{Kr-H}.
\item The unique solution of \eqref{PE0} in $\mathcal{S}(\bRd)$ is
\bel{PE-S-ep}
v(x)= \int_{\bRd} \Phi_{\varepsilon,\dd}(x-y) g(y)\, dy,
\ee
where
\bel{GF-L}
\Phi_{\varepsilon,\dd}(x)=\int_0^{+\infty}G_{\varepsilon,\dd}(t,x) \, dt;
\ee
cf. \cite[Theorems 1.2.1 and 1.6.2, and Exercise 1.6.5]{Kr-H}.
\item If $\varepsilon=0$ and $d\geq 2$, then the unique solution of \eqref{PE0} in $\mathcal{S}(\bRd)$ is
$$
v(x)= \int_{\bRd} \Phi_{0,\dd}(x-y) g(y)\, dy,
$$
where
$$
\Phi_{0,\dd}(x)=
\begin{cases}
\ds -\frac{1}{2\pi\nu} \ln \frac{|x|}{\sqrt{\nu}}\,,&\ \dd=2,\\
&\\
\ds \frac{\Gamma\left(\frac{\dd}{2}-1\right)}{4\pi^{\dd/2}\nu|x|^{\dd-2}}\,,& \dd\geq 3,
\end{cases}
$$
and
$$
\Gamma(x)=\int_0^{+\infty} t^{x-1} e^{-t}\, dt;
$$
cf. \cite[Section 2.2.1]{Evans}.
\end{itemize}

Denote by $K_{p}=K_{p}(x)$, $p, x\in \bR$, the modified Bessel function of the second kind \cite[Section 10.25]{NIST}.

\begin{proposition}[cf. \protect{\cite[Proposition 7.2.1]{GJ}}]
The following equalities hold:
\begin{align}
\label{Bessel1}
\Phi_{\varepsilon,\dd}(x)&=
(2\pi\nu)^{-\dd/2}\left(\frac{\varepsilon\nu}{x^2}\right)^{(\dd-2)/4}K_{(\dd-2)/2} \big(\sqrt{\varepsilon/\nu}\, |x|\big),\\
\label{Bessel2}
\lim_{\varepsilon\to 0}\Phi_{\varepsilon,\dd}(x)&= \Phi_{0,\dd}(x),\ x\in \bRd\setminus \{0\},\ \ \dd\geq 3.
\end{align}
In particular,
\bel{Phi123}
\Phi_{\varepsilon,\dd}(x)=
\begin{cases}
\ds \frac{1}{2\sqrt{\varepsilon\nu}}\,e^{-\sqrt{\varepsilon/\nu}\,|x|},&\ \dd=1,\\
&\\
\ds \frac{1}{2\pi\nu}\,K_0 \big(\sqrt{\varepsilon/\nu}\, |x|\big),&\ \dd=2,\\
&\\
\ds \frac{1}{4\pi\nu |x|}\,e^{-\sqrt{\varepsilon/\nu}\, |x|},&\  \dd=3.
\end{cases}
\ee
\end{proposition}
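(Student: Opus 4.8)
The plan is to compute the integral in \eqref{GF-L} directly, using the explicit heat kernel \eqref{GF-H}, and to recognize the result as the standard integral representation of the modified Bessel function $K_p$. Substituting \eqref{GF-H} into \eqref{GF-L} gives
\begin{equation}
\Phi_{\varepsilon,\dd}(x)=\frac{1}{(4\pi\nu)^{\dd/2}}\int_0^{+\infty} t^{-\dd/2}\exp\left(-\varepsilon t-\frac{|x|^2}{4\nu t}\right)dt.
\end{equation}
The integrand has the shape $t^{\mu-1}\exp(-at-b/t)$ with $\mu=1-\dd/2$, $a=\varepsilon$, $b=|x|^2/(4\nu)$, and I would match this against the classical formula
\begin{equation}
\int_0^{+\infty} t^{\mu-1}\exp\left(-at-\frac{b}{t}\right)dt=2\left(\frac{b}{a}\right)^{\mu/2}K_{\mu}\big(2\sqrt{ab}\big),\ \ a,b>0;
\end{equation}
see, e.g., \cite[Section 10.32]{NIST}. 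With the above values of $\mu,a,b$, the argument becomes $2\sqrt{ab}=\sqrt{\varepsilon/\nu}\,|x|$ and the prefactor $(b/a)^{\mu/2}$ rearranges into $(\varepsilon\nu/x^2)^{(\dd-2)/4}$ after absorbing the constant $(4\pi\nu)^{-\dd/2}$; using $K_{\mu}=K_{-\mu}$ to flip the sign of the order then yields \eqref{Bessel1}. This is a bookkeeping exercise once the integral formula is invoked, so I do not expect any genuine difficulty in this first part.

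For \eqref{Bessel2}, I would pass to the limit $\varepsilon\to 0$ in \eqref{Bessel1} for fixed $x\not=0$ and $\dd\geq 3$, so that the order $p=(\dd-2)/2$ is positive. The relevant fact is the small-argument asymptotics of the Bessel function, namely $K_p(z)\sim \tfrac{1}{2}\Gamma(p)(z/2)^{-p}$ as $z\to 0^+$ for $p>0$; see \cite[Formula 10.30.2]{NIST}. Writing $z=\sqrt{\varepsilon/\nu}\,|x|$ and combining this with the prefactor $(\varepsilon\nu/x^2)^{(\dd-2)/4}$, the powers of $\varepsilon$ cancel exactly (both carry the exponent $(\dd-2)/4$ in absolute value but of opposite sign), leaving an $\varepsilon$-independent expression. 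Collecting constants reproduces $\Phi_{0,\dd}(x)=\Gamma(\tfrac{\dd}{2}-1)\big(4\pi^{\dd/2}\nu|x|^{\dd-2}\big)^{-1}$, which agrees with the stated $\Phi_{0,\dd}$. The case $\dd=2$ corresponds to $p=0$, where the asymptotics of $K_0$ are logarithmic rather than a pure power, which is exactly why \eqref{Bessel2} is restricted to $\dd\geq 3$.

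The final display \eqref{Phi123} is then a matter of specializing \eqref{Bessel1} to $\dd=1,2,3$ and simplifying the half-integer Bessel functions. For $\dd=2$ the order is $p=0$ and \eqref{Bessel1} already gives $(2\pi\nu)^{-1}K_0(\sqrt{\varepsilon/\nu}\,|x|)$ with no prefactor to simplify. For $\dd=1$ and $\dd=3$ the orders are $p=\mp\tfrac12$, and I would use the closed forms $K_{1/2}(z)=K_{-1/2}(z)=\sqrt{\pi/(2z)}\,e^{-z}$ from \cite[Formula 10.39.2]{NIST}; substituting $z=\sqrt{\varepsilon/\nu}\,|x|$ and cancelling the algebraic factors against the prefactor produces the two exponential expressions in \eqref{Phi123}. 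The only point requiring mild care is tracking the fractional powers of $\varepsilon$, $\nu$, and $|x|$ through the prefactor so that they combine correctly with the $z^{-1/2}$ coming from $K_{\pm 1/2}$; this is the main (though still routine) obstacle, and I expect no conceptual difficulty beyond careful algebra.
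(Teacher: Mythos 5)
Your proposal is correct and follows essentially the same route as the paper: the classical integral $\int_0^{+\infty}t^{\mu-1}e^{-at-b/t}\,dt=2(b/a)^{\mu/2}K_{\mu}(2\sqrt{ab})$ you invoke is precisely the formula (Gradshteyn--Ryzhik 3.471.9) the paper combines with \eqref{GF-L}, and your treatment of \eqref{Bessel2} via $K_p=K_{-p}$ and the small-argument asymptotics, and of \eqref{Phi123} via the closed form of $K_{\pm 1/2}$, matches the paper's argument step for step. The algebra you outline checks out.
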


\begin{proof}
Equality \eqref{Bessel1}:  combine \eqref{GF-L} with \cite[Formula 3.471.9]{Gradshtein-Ryzhyk}.
Equality \eqref{Bessel2}: use the properties of the function $K_p$,
in particular, $K_{p}(x)=K_{-p}(x),\ \nu>0$ \cite[Formula 10.27.3]{NIST} and
$K_{p}(x)\sim 2^{p-1}\Gamma(p)x^{-p},\ x\to 0, \ p>0$  \cite[Formula 10.30.2]{NIST}. Of course, one can get
\eqref{Bessel2} directly by passing to the limit in \eqref{GF-L}. Equality \eqref{Phi123} is a particular
case of \eqref{Bessel1} because
$$
K_{\pm 1/2}(z)=\sqrt{\frac{\pi}{2 z}}\,e^{-z};
$$
cf.  \cite[Formula 10.39.2]{NIST}.
\end{proof}

Combining \eqref{Phi123} with  $K_0(x)\sim -\ln x,\ x\to 0$ \cite[Formula 10.30.3]{NIST}, we see
that, in the case $\dd=2$, equality  \eqref{Bessel2} is missed by a logarithmic term: for fixed $x\not=0$,
$$
\Phi_{\varepsilon,2}(x)\sim  \Phi_{0,2}(x)-\frac{1}{4\pi\nu} \ln \varepsilon,\ \varepsilon\to 0.
$$
 Representation \eqref{PE-S-ep} has a version in the Fourier domain, with no explicit dependence on $\dd$:
 $$
 \hat{v}(\xi)=\frac{\hat{f}(\xi)}{\varepsilon+\nu|\xi|^2}.
 $$
Passing to the limit $\varepsilon\to 0$, we get
$$
 \hat{v}(\xi)=\frac{\hat{f}(\xi)}{\nu|\xi|^2}.
$$
Event though  the function $\xi\mapsto |\xi|^{-2}$ is not integrable at zero for $\dd=1,2$,
it defines  a homogenous distribution on $\cS_0(\bRd)$, and its inverse
Fourier transform is equal to $\Phi_{0,\dd}$ \cite[Chapter 32]{Donoghue-FTransf}.

  To conclude, we summarize how the main operators act in the spaces $\tilde{H}^{\gamma}$.

\begin{proposition}
\label{prop:MO}
For every $\gamma\in \bR$,
\begin{itemize}
\item[(C1)] the operator $\Laplace$ extends to a bounded linear operator from
$\tilde{H}^{\gamma+2}$ to $\tilde{H}^{\gamma}$;
\item[(C2)] the heat semigroup
\bel{heat:Rd}
S_t: f(x)\mapsto \int_{\bRd} G_{\nu\varepsilon, \dd}(t,x-y)f(y)\, dy, \   \varepsilon\geq 0,\ t>0,\ f\in \cS(\bRd),
\ee
 extends to  a bounded linear operator on $\tilde{H}^{\gamma}$, and, if $\varepsilon>0$, then
 \bel{Heat-zero-ep}
 \|S_tf\|_{\tilde{H}^{\gamma}}\leq Ce^{-\delta t}\|f\|_{\tilde{H}^{\gamma}}
 \ee
 for some $C>0, \, \delta>0$ and all $f\in \tilde{H}^{\gamma}$.
\end{itemize}
\end{proposition}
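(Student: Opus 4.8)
The plan is to treat the two claims separately: (C1) follows from the three-term recursion of the Hermite functions, while (C2), which is the substantive part, combines a commutator (Heisenberg) identity with the Shubin characterization of the spaces $\tilde{H}^{\gamma}$.

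For (C1), since $\tilde{\Lambda}^2=-\Laplace+|x|^2$, I would write $\Laplace=|x|^2-\tilde{\Lambda}^2$ and observe that $\tilde{\Lambda}^2$ maps $\tilde{H}^{\gamma+2}$ isometrically onto $\tilde{H}^{\gamma}$, since it multiplies the $\mathbf{n}$-th Hermite coefficient by $\lambda_{\mathbf{n}}^2$. Thus (C1) reduces to the boundedness of multiplication by $|x|^2$ from $\tilde{H}^{\gamma+2}$ to $\tilde{H}^{\gamma}$. With the ladder operators $a_j=2^{-1/2}(x_j+\partial_{x_j})$ and $a_j^*=2^{-1/2}(x_j-\partial_{x_j})$, one has
\[
x_j^2 h_{\mathbf{n}}=\tfrac12\Big(\sqrt{n_j(n_j-1)}\,h_{\mathbf{n}-2e_j}+(2n_j+1)h_{\mathbf{n}}+\sqrt{(n_j+1)(n_j+2)}\,h_{\mathbf{n}+2e_j}\Big),
\]
so the matrix of $|x|^2$ in the Hermite basis is banded, with entries of size $O(n_j)=O(\lambda_{\mathbf{n}}^2)$ connecting only $\mathbf{n}$ and $\mathbf{n}\pm2e_j$. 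Such a shift changes $\lambda_{\mathbf{n}}^2=2|\mathbf{n}|+\dd$ by at most $4$, and $\lambda_{\mathbf{n}}^2\ge\dd>0$, so the ratio of neighbouring eigenvalues is bounded above and below; a direct comparison of the weighted coefficient sums then yields $\||x|^2f\|_{\tilde{H}^{\gamma}}\le C\|f\|_{\tilde{H}^{\gamma+2}}$, proving (C1).

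For (C2) the key observation is the factorization $S_t=e^{-\nu\varepsilon t}P_t$, where $P_t=e^{t\nu\Laplace}$ is the free heat semigroup (convolution with a Gaussian); in the Fourier domain $P_t$ is multiplication by $e^{-\nu t|\xi|^2}\le1$, so $P_t$ is a contraction on $L_2(\bRd)=\tilde{H}^0$. The main obstacle is that $P_t$ does \emph{not} commute with $\tilde{\Lambda}$, because $\Laplace$ and $|x|^2$ do not commute, so $P_t$ is not diagonal in the Hermite basis and its norm on $\tilde{H}^{\gamma}$ cannot be read off directly. To circumvent this I would use, for nonnegative integers $m$, the standard equivalence of $\|\cdot\|_{\tilde{H}^m}$ with $\big(\sum_{|\alpha|+|\beta|\le m}\|x^{\alpha}\partial^{\beta}f\|_{L_2}^2\big)^{1/2}$, the Shubin characterization of these spaces (cf. \cite{Shubin-PDO}, \cite[Section 1.5]{GJ}). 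Derivatives commute with $P_t$, and from $[x_j,\Laplace]=-2\partial_{x_j}$ one obtains the Heisenberg identity $x_jP_t=P_t(x_j-2\nu t\,\partial_{x_j})$, hence $x^{\alpha}\partial^{\beta}P_tf=P_t(x-2\nu t\nabla)^{\alpha}\partial^{\beta}f$. Since $P_t$ is an $L_2$-contraction and $(x-2\nu t\nabla)^{\alpha}\partial^{\beta}$ expands into terms $x^{\alpha'}\partial^{\beta'}$ with $|\alpha'|+|\beta'|\le m$ and coefficients polynomial in $t$ of degree at most $m$, the Shubin characterization gives $\|P_tf\|_{\tilde{H}^m}\le C(1+t)^m\|f\|_{\tilde{H}^m}$, and therefore $\|S_tf\|_{\tilde{H}^m}\le C(1+t)^m e^{-\nu\varepsilon t}\|f\|_{\tilde{H}^m}$.

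This already settles both parts of (C2) for integer $\gamma=m\ge0$: for each fixed $t$ the operator is bounded, and when $\varepsilon>0$ the factor $(1+t)^m e^{-\nu\varepsilon t}$ is dominated by $Ce^{-\delta t}$ for any $0<\delta<\nu\varepsilon$. To reach arbitrary real $\gamma$, I would extend by duality and interpolation: the kernel $G_{\nu\varepsilon,\dd}(t,\cdot)$ is even, so $S_t$ is self-adjoint on $L_2(\bRd)$, and boundedness on $\tilde{H}^m$ transfers to $\tilde{H}^{-m}$ with the same norm by the $L_2$-duality of Remark \ref{rem:dual}; since $\mathbb{H}_{\tilde{\Lambda}}$ is a complex-interpolation scale, boundedness on consecutive integer spaces interpolates to all real $\gamma$, with the polynomial factor $(1+t)^{|\gamma|}$ interpolating accordingly so that the exponential decay for $\varepsilon>0$ is preserved. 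The one step that genuinely requires care is the commutation identity together with the bookkeeping of the $t$-dependent coefficients, since it is there that the non-commutativity of $\Laplace$ and $|x|^2$ is absorbed; everything else is either the definition of the scale or a routine interpolation/duality argument.
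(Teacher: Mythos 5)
Your proof is correct, but it takes a more self-contained route than the paper, whose entire argument is two lines: for (C1) ``direct computations'' on $\tilde{H}^{2k+2}\to\tilde{H}^{2k}$ followed by interpolation (for $\gamma>0$) and duality (for $\gamma<0$), and for (C2) a citation of \cite[Theorem 2.4]{RT-Heat}. Your treatment of (C1) is essentially the direct computation the paper alludes to, made explicit through the identity $\Laplace=|x|^2-\tilde{\Lambda}^2$ and the banded Hermite matrix of $|x|^2$; note that your neighbour-comparison of the eigenvalues ($\lambda_{\mathbf{n}\pm 2e_j}^2=\lambda_{\mathbf{n}}^2\pm 4$ with $\lambda_{\mathbf{n}}^2\geq\dd$) in fact handles every real $\gamma$ in one stroke, so the interpolation/duality step the paper invokes for (C1) becomes unnecessary. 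For (C2) you replace the external citation by a complete argument: the factorization $S_t=e^{-\nu\varepsilon t}P_t$, the $L_2$-contractivity of $P_t$, the Heisenberg identity $x_jP_t=P_t(x_j-2\nu t\,\partial_{x_j})$ (which is correct, as $[\Laplace,[\Laplace,x_j]]=0$ truncates the commutator expansion), and the Shubin norm equivalence on $\tilde{H}^m$, followed by duality via the self-adjointness of $S_t$ on $L_2(\bRd)$ and complex interpolation in the weighted-$\ell_2$ scale. What your route buys is transparency about the mechanism behind \eqref{Heat-zero-ep}: the operator norm of $P_t$ on $\tilde{H}^{\gamma}$ grows only polynomially in $t$ (it is \emph{not} uniformly bounded, since $P_t$ is not diagonal in the Hermite basis), and the exponential factor $e^{-\nu\varepsilon t}$ absorbs this growth; the paper's citation hides exactly this point. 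The only places where a referee would ask you to add a line are routine: a density statement ($S_t$ maps $\cS(\bRd)$ to itself) justifying the duality extension, and the standard fact that $[\tilde{H}^{\gamma_0},\tilde{H}^{\gamma_1}]_{\theta}=\tilde{H}^{(1-\theta)\gamma_0+\theta\gamma_1}$; neither affects correctness.
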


\begin{proof}

(C1) Direct computations show that $\Laplace$ is bounded from $\tilde{H}^{2k+2}$ to $\tilde{H}^{2k}$
 for every $k=0,1,2,\ldots.$ The case of $\gamma>0$ then follows by interpolation and
 $\gamma<0$, by duality.

(C2) This follows by \cite[Theorem 2.4]{RT-Heat}.
\end{proof}

\subsection{Main Results}
Given $\nu>0$, $\sigma>0$, $\varepsilon >0$,  a cylindrical Brownian motion
$W$ on $L_2(\bRd)$, and $\varphi\in L_2(\Omega; \tilde{H}^{r})$ independent of $W$,
consider  stochastic evolution equations
\begin{align}
\label{HE-main-em}
u(t)&=\varphi+{\nu} \int_0^t (\Laplace-\varepsilon) u(s)\, ds +\sigma{W}(t),  \\
\label{HE-main-zm}
u(t)&=\varphi+{\nu} \int_0^t \Laplace u(s)\, ds +\sigma{W}(t), \\
\label{HE-main-hh}
u(t)&=\varphi+{\nu} \int_0^t \tilde{\Lambda}^2 u(s)\, ds +\sigma{W}(t),
\end{align}
with $\tilde{\Lambda}^2$ from \eqref{HH-op}. In physics literature, the
deterministic version of \eqref{HE-main-hh} is known as the Hermite heat equation  \cite{HermiteHeat1}.

\begin{definition}
\label{def:solRd}
For each of the three equations, given the initial condition $\varphi\in L_2(\Omega; \tilde{H}^r)$,
a solution $u=u(t)$ on $[0,T]$ is an adapted  process with values in
$L_2\big(\Omega\times(0,T);\tilde{H}^{r+1}\big)\bigcap L_2\big(\Omega;\cC((0,T);\tilde{H}^{\gamma}\big)$, such that
the corresponding equality holds in $\tilde{H}^{r-1}$ for all $t\in [0,T]$ with probability one.
\end{definition}

\begin{theorem}
\label{th:main-em}
Assume that $\varphi\in \tilde{H}^{-\gamma}$ and $\gamma>\dd$. Then
\begin{enumerate}
\item Equation \eqref{HE-main-em} has a unique solution  for every $T>0$;
\item The solution has a representation
\bel{Mild-em}
u(t)=S_t\varphi+\int_0^t S_{t-s}dW(s),
\ee
with $S_t$ from \eqref{heat:Rd};
 \item as $t\to +\infty$, the solution  converges in distribution to  $(2\nu)^{-1/2}\sigma W_{\varepsilon}$, that is, the
 Gaussian measure generated on $\tilde{H}^{-\gamma}$ by the solution converges weakly to the
 Gaussian measure generated by $(2\nu)^{-1/2}\sigma W_{\varepsilon}$.
 \end{enumerate}
\end{theorem}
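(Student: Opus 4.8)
The plan is to follow the template of the proof of Theorem~\ref{th:main-bd}, with one structural change: since the Hermite functions $\mfk{h}_k$ diagonalize $\tilde{\Lambda}^2$ but \emph{not} $\Laplace$, equation \eqref{HE-main-em} no longer decouples into scalar It\^o equations, so in place of the modewise analysis I would use the semigroup representation \eqref{Mild-em} together with a covariance computation in the Fourier domain. For the first assertion I would invoke \cite[Theorem 3.1]{Roz} for the triple $\tilde{H}^{1-\gamma}\hookrightarrow\tilde{H}^{-\gamma}\hookrightarrow\tilde{H}^{-1-\gamma}$ with $A=\nu(\Laplace-\varepsilon)$ and driving noise $M(t)=W(t)$, which lies in $L_2\big(\Omega;\cC((0,T);\tilde{H}^{-\gamma})\big)$ for $\gamma>\dd$ by \eqref{CBM-Rd}; the boundedness $A\colon\tilde{H}^{1-\gamma}\to\tilde{H}^{-1-\gamma}$ that this requires is exactly part (C1) of Proposition~\ref{prop:MO}.

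The step I expect to be the main obstacle is the coercivity (G\aa{}rding) inequality demanded by \cite{Roz}, namely
\[
2\nu\,\langle(\Laplace-\varepsilon)u,u\rangle_{\tilde{H}^{-\gamma}}\le-\delta\,\|u\|_{\tilde{H}^{1-\gamma}}^2+C\,\|u\|_{\tilde{H}^{-\gamma}}^2 .
\]
Writing $\Laplace=|x|^2-\tilde{\Lambda}^2$ from \eqref{HH-op} and using the exact identity $\langle\tilde{\Lambda}^2 u,u\rangle_{\tilde{H}^{-\gamma}}=\|u\|_{\tilde{H}^{1-\gamma}}^2$ reduces matters to absorbing the multiplication term $\langle|x|^2u,u\rangle_{\tilde{H}^{-\gamma}}$ into a small fraction of $\|u\|_{\tilde{H}^{1-\gamma}}^2$ plus a multiple of $\|u\|_{\tilde{H}^{-\gamma}}^2$. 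The difficulty is that $|x|^2$ does not commute with $\tilde{\Lambda}^{-\gamma}$, so the elementary form bound $|x|^2\le\tilde{\Lambda}^2$ does not pass to the weighted inner product, and one must control the commutator $[\tilde{\Lambda}^{-\gamma},|x|^2]$, equivalently interpolate between the $\tilde{H}$-scales. I would sidestep the bare estimate by leaning on the exponential-stability bound \eqref{Heat-zero-ep} of Proposition~\ref{prop:MO}(C2), which already encodes this coercivity at the semigroup level and pins the decay rate $\delta$ (up to lower-order terms) to be governed by $\nu\varepsilon$, the whole-space analogue of $\lambda_1$.

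For the representation \eqref{Mild-em} I would identify the variational solution with the mild (stochastic-convolution) solution in the sense of \cite{DaPr1}: by Proposition~\ref{HS-gen} with $\alpha=1/(2\dd)$ the embedding $L_2(\bRd)=\tilde{H}^0\hookrightarrow\tilde{H}^{-\gamma}$ is Hilbert--Schmidt for $\gamma>\dd$, so $\int_0^t\|S_{t-s}\|_{\mathrm{HS}(L_2,\tilde{H}^{-\gamma})}^2\,ds<\infty$ and the convolution $\int_0^tS_{t-s}\,dW(s)$ is a well-defined centered Gaussian element of $\tilde{H}^{-\gamma}$; uniqueness in \cite{Roz} then forces the two solutions to coincide.

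Finally, for the long-time limit I would split $u(t)=S_t\varphi+\sigma\int_0^tS_{t-s}\,dW(s)$. The first term tends to $0$ in $\tilde{H}^{-\gamma}$ by \eqref{Heat-zero-ep}, hence is irrelevant to the weak limit. The convolution is, for each $t$, centered Gaussian in $\tilde{H}^{-\gamma}$ with covariance operator $\cR_t=\sigma^2\int_0^tS_sS_s^*\,ds$, whose Fourier multiplier is $\tfrac{\sigma^2}{2\nu}\,(1-e^{-2\nu t(\varepsilon+|\xi|^2)})/(\varepsilon+|\xi|^2)$ and increases monotonically,
\[
\frac{\sigma^2}{2\nu}\cdot\frac{1-e^{-2\nu t(\varepsilon+|\xi|^2)}}{\varepsilon+|\xi|^2}\longrightarrow\frac{\sigma^2}{2\nu}\cdot\frac{1}{\varepsilon+|\xi|^2},\qquad t\to\infty,
\]
so that $\cR_t\uparrow\cR_\infty=\tfrac{\sigma^2}{2\nu}(\varepsilon-\Laplace)^{-1}$. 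By \eqref{ell-gen} the Euclidean free field of mass $\sqrt{\varepsilon}$ of Definition~\ref{Def-GFF-Rd} carries exactly the covariance $(\varepsilon-\Laplace)^{-1}$ in the $L_2$-duality of Remark~\ref{rem:dual}, so $\cR_\infty$ is the covariance of $(2\nu)^{-1/2}\sigma\bar{W}_\varepsilon$. Since $\gamma>\dd=1/(2\alpha)$, Proposition~\ref{HS-gen} makes $\cR_\infty$ trace class on $\tilde{H}^{-\gamma}$, whence the monotone convergence $\cR_t\uparrow\cR_\infty$ takes place in trace norm; convergence of the covariances of centered Gaussian measures in trace norm on the separable Hilbert space $\tilde{H}^{-\gamma}$ yields their weak convergence, which is the assertion.
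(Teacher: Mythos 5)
Your proposal follows the paper's proof in all essentials: existence and uniqueness via \cite[Theorem 3.1]{Roz} with the triple $\tilde{H}^{1-\gamma}\subset\tilde{H}^{-\gamma}\subset\tilde{H}^{-\gamma-1}$ and \eqref{CBM-Rd}; the mild form \eqref{Mild-em} via the Da Prato--Zabczyk framework (the paper cites \cite[Theorem 5.4]{DaPr1} together with Proposition~\ref{prop:MO}(C2), you argue via the Hilbert--Schmidt embedding $L_2(\bRd)\hookrightarrow\tilde{H}^{-\gamma}$ plus uniqueness---same content); and the limit covariance via the Fourier multiplier $e^{-\nu t(\varepsilon+|\xi|^2)}$, whose squared time integral produces $\tfrac{\sigma^2}{2\nu}(\varepsilon+|\xi|^2)^{-1}$. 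The one genuine difference is in how the limit is upgraded to weak convergence of measures on $\tilde{H}^{-\gamma}$: the paper tests against $f,g\in\cS(\bRd)$ and appeals to the scheme of the Introduction, while you work with the covariance operators $\cR_t$ and use monotone trace-norm convergence $\cR_t\uparrow\cR_\infty$. Your packaging makes explicit the tightness step (item (4) of the Introduction's outline) that the paper leaves implicit, which is a genuine improvement in transparency.

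Two of your steps do not hold up as written. First, the coercivity discussion: you rightly flag that $\Laplace$ is not diagonal in the Hermite basis, but ``leaning on'' the semigroup bound \eqref{Heat-zero-ep} does not verify a G\aa{}rding inequality---exponential decay of $\|S_t\|$ on the fixed space $\tilde{H}^{-\gamma}$ gives no control of the higher norm $\|u\|_{\tilde{H}^{1-\gamma}}$. Moreover, the absorption you sketch cannot work with a small constant, because $|x|^2$ has the \emph{same} order as $\tilde{\Lambda}^2$ in this calculus: translating a bump to $|x|=R$ shows $\||x|v\|_{0}^2$ is not dominated by $\|\nabla v\|_0^2+\|v\|_0^2$, so the form $\langle(\Laplace-\varepsilon)u,u\rangle_{\tilde{H}^{-\gamma}}$ controls (up to commutators) only the gradient part of the $\tilde{H}^{1-\gamma}$-norm, not the weight part. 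The paper is equally silent here, citing only (C1), so you are not worse off, but your text presents a non-argument as though it closed the gap; the honest routes are to check exactly which coercivity \cite[Theorem 3.1]{Roz} requires, or to build the solution directly from the mild formula. Second, you invoke Proposition~\ref{HS-gen} to conclude that $\cR_\infty=\tfrac{\sigma^2}{2\nu}(\varepsilon-\Laplace)^{-1}$ is trace class on $\tilde{H}^{-\gamma}$, but that proposition operates within a single Hilbert scale, whereas here you are crossing scales: what is needed is that the Bessel space $H^1$ embeds Hilbert--Schmidt into $\tilde{H}^{-\gamma}$, equivalently that $(\varepsilon-\Laplace)^{-1/2}$ is bounded on $\tilde{H}^{-\gamma}$, after which $\cR_\infty$ is a bounded conjugate of the trace-class covariance of white noise. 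This is true, but it requires an interpolation/duality argument of the same kind as Proposition~\ref{prop:MO}(C1), not a citation of Proposition~\ref{HS-gen}.
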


\begin{proof}
The general theory of SPDEs in the Sobolev spaces $H^{\gamma}$, such as \cite{Kr_Lp1}, is not
applicable because the process $W=W(t)$ does not take values in any of $H^{\gamma}$. Similarly, the   results
from \cite{DpZ-CBM} do not apply because the operator $(-\Laplace)^{-1}$ is not Hilbert-Schmidt on $L_2(\bRd)$.

Fortunately,
for existence and uniqueness of solution,   relation \eqref{CBM-Rd} and  first part of Proposition \ref{prop:MO}
 make it possible to apply  \cite[Theorem 3.1]{Roz} with
$$
A=\nu(\Laplace-\varepsilon), \  \mathbb{X}=\tilde{H}^{1-\gamma}, \
\mathbb{H}=\tilde{H}^{-\gamma}, \ \mathbb{X}'=\tilde{H}^{-\gamma-1},\ M(t)=W(t).
$$
Similarly, the second part of  Proposition \ref{prop:MO} makes  it possible to apply \cite[Theorem 5.4]{DaPr1},
from which  \eqref{Mild-em} follows.

To prove convergence, note that, by  \eqref{GF-L}, the general argument outlined in Introduction works, with $\cO=\bRd$ and $\cG=\cS(\bRd)$.
Keeping in mind that the fundamental solution for \eqref{HE-main-em} is $G_{\nu\varepsilon, \dd}$, which, by \eqref{GF-H}, acts
as the multiplier
$$
\hat{f}(\xi) \ \mapsto e^{-\nu(|\xi|^2+\varepsilon)t}\hat{f}(\xi)
$$
 in the Fourier domain, we easily complete the proof.
\end{proof}

\begin{theorem}
\label{th:main-zm}
If $\varphi\in \tilde{H}^{-\gamma}$ and $\gamma>\dd$, then
equation \eqref{HE-main-zm} has a unique solution  for every $T>0$ and the
 solution has a representation
\bel{Mild-zm}
u(t)=S_t\varphi+\int_0^t S_{t-s}dW(s),
\ee
where $S_t$ is from \eqref{heat:Rd} with $\varepsilon = 0$.

If $\varphi\in {H}^{-\gamma}$ and $\gamma>\dd$, then,  as $t\to +\infty$, the solution  converges in distribution to
 $(2\nu)^{-1/2}\sigma \bar{W}$, that is, the
 Gaussian measure generated on $\tilde{H}^{-\gamma}$ by the solution converges weakly to the
 Gaussian measure generated by $(2\nu)^{-1/2}\sigma \bar{W}$.
\end{theorem}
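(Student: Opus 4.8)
The plan is to follow the proof of Theorem \ref{th:main-em} with the mass $\varepsilon$ set to $0$, isolating the two points where the loss of the spectral gap forces extra care. For existence and uniqueness I keep the identifications $A=\nu\Laplace$, $\mathbb{X}=\tilde{H}^{1-\gamma}$, $\mathbb{H}=\tilde{H}^{-\gamma}$, $\mathbb{X}'=\tilde{H}^{-\gamma-1}$, $M(t)=W(t)$ and apply \cite[Theorem 3.1]{Roz}; this uses only the boundedness $\Laplace\colon\tilde{H}^{\gamma+2}\to\tilde{H}^{\gamma}$ from Proposition \ref{prop:MO}(C1), which does not involve $\varepsilon$. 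The mild representation \eqref{Mild-zm} then follows from \cite[Theorem 5.4]{DaPr1} with $S_t$ the $\varepsilon=0$ semigroup \eqref{heat:Rd}; here only the boundedness of $S_t$ on $\tilde{H}^{\gamma}$ (the part of Proposition \ref{prop:MO}(C2) that survives $\varepsilon=0$) is needed, since on a finite interval $[0,T]$ the exponential estimate \eqref{Heat-zero-ep} plays no role.

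For the convergence I run the general scheme of the Introduction with $\cO=\bRd$ and $\cG=\cS_0(\bRd)$. Since $\varphi$ is independent of $W$, testing \eqref{Mild-zm} against $f,g\in\cS_0(\bRd)$ splits the covariance as in \eqref{Mild-Gen} into a deterministic term from $S_t\varphi$ and a stochastic term. For the stochastic term the $\varepsilon=0$ heat semigroup acts as the Fourier multiplier $\hat f(\xi)\mapsto e^{-\nu|\xi|^2 t}\hat f(\xi)$, so Plancherel gives
$$
\sigma^2\!\int_0^t\!\!\int_{\bRd} u^{\Hep,f}(s,y)\,u^{\Hep,g}(s,y)\,dy\,ds
=\frac{\sigma^2}{2\nu}\int_{\bRd}\frac{\hat f(\xi)\overline{\hat g(\xi)}}{|\xi|^2}\Big(1-e^{-2\nu|\xi|^2 t}\Big)\,d\xi.
$$
Because $f,g\in\cS_0(\bRd)$ have Fourier transforms vanishing near the origin, the integrand is dominated by $|\xi|^{-2}|\hat f|\,|\hat g|\in L_1(\bRd)$ for every $\dd\geq 1$, and dominated convergence sends the right-hand side to $\tfrac{\sigma^2}{2\nu}\int_{\bRd}|\xi|^{-2}\hat f\,\overline{\hat g}\,d\xi$, which by \eqref{eq:GFF-12} equals $\tfrac{\sigma^2}{2\nu}\,\bE\big(\bar{W}[f]\bar{W}[g]\big)$. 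This is exactly the limit \eqref{Conv000} in the present setting and pins down the scalar $(2\nu)^{-1/2}\sigma$.

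The deterministic term is the main obstacle, and it is the reason the hypothesis is strengthened to $\varphi\in H^{-\gamma}$. With $\varepsilon=0$ the semigroup has no exponential decay, the estimate \eqref{Heat-zero-ep} is unavailable, and $S_t\varphi$ cannot be killed by a uniform bound. Instead I verify \eqref{Heat-zero} in the norm of $\tilde{H}^{-\gamma}$ directly: for $\varphi\in H^{-\gamma}$ the Fourier transform $\hat\varphi$ is locally square integrable with $\int_{\bRd}(\varepsilon+|\xi|^2)^{-\gamma}|\hat\varphi|^2\,d\xi<\infty$, so that
$$
\|S_t\varphi\|_{H^{-\gamma}}^2=\int_{\bRd}(\varepsilon+|\xi|^2)^{-\gamma}\,e^{-2\nu|\xi|^2 t}\,|\hat\varphi(\xi)|^2\,d\xi\longrightarrow 0
$$
by dominated convergence, and the embedding $H^{-\gamma}\subset\tilde{H}^{-\gamma}$ from \eqref{SSP-incl} yields $\|S_t\varphi\|_{\tilde{H}^{-\gamma}}\to 0$. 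Applying the same bound to $\bE\varphi$ and to the second moment shows that both the mean $S_t\bE\varphi$ and the deterministic covariance $\bE\big(u^{\Hep,\varphi}[t,f]\,u^{\Hep,\varphi}[t,g]\big)$ vanish as $t\to+\infty$. This is precisely where $H^{-\gamma}$ is essential: a general $\varphi\in\tilde{H}^{-\gamma}$ may carry low-frequency mass---a Fourier singularity at the origin, where the multiplier $e^{-\nu|\xi|^2 t}$ stays near $1$---that the heat flow preserves, so $S_t\varphi$ need not converge to $0$ in $\tilde{H}^{-\gamma}$; the Bessel-potential weight $(\varepsilon+|\xi|^2)^{-\gamma}$ is exactly what tames the origin.

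Finally, convergence of the covariance functionals on the dense set $\cS_0(\bRd)$, together with $S_t\bE\varphi\to 0$, upgrades to weak convergence of the centered Gaussian measures on $\tilde{H}^{-\gamma}$, as in Theorem \ref{th:main-em}. Pointwise convergence of the characteristic functionals is immediate; uniform tightness follows from the bound $\mathrm{tr}\,C_t\leq \mathrm{tr}\,C_\infty$, where $C_t$ is the covariance operator of the solution and $C_\infty$ that of the limit, the covariances being monotone increasing in $t$ through the factor $1-e^{-2\nu|\xi|^2 t}$. Finiteness of $\mathrm{tr}\,C_\infty$ on $\tilde{H}^{-\gamma}$ for $\gamma>\dd$ is the whole-space analogue of the condition $\sum_k\lambda_k^{-2\gamma}<\infty$ from the bounded-domain estimate, and the limiting field is realized as a Gaussian measure on $\tilde{H}^{-\gamma}$ through the embeddings \eqref{SSP-incl} and the framework of Proposition \ref{prop:GM-HS}. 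The low-dimensional cases $\dd=1,2$ are accommodated by the choice $\cG=\cS_0(\bRd)$, since there $|\xi|^{-2}$ is not locally integrable and the free field must be read through $\cS_0$ test functions via Definition \ref{def:GFF-12}; their detailed treatment is deferred to Section \ref{Sec:OneD}.
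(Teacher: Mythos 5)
Your proposal is correct and follows essentially the same route as the paper: the same identifications for \cite[Theorem 3.1]{Roz} and \cite[Theorem 5.4]{DaPr1}, the same passage to the Fourier domain with Plancherel, the same use of $f,g\in\cS_0(\bRd)$ to dominate $|\xi|^{-2}\hat f\,\overline{\hat g}$, and the same exploitation of $\varphi\in H^{-\gamma}$ via dominated convergence to kill the initial-condition term (the paper writes this as a double Fourier integral rather than as norm decay of $S_t\varphi$, but the mechanism is identical). The only addition is your explicit tightness argument for upgrading covariance convergence to weak convergence of measures, a step the paper leaves implicit.
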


\begin{proof}
Existence, uniqueness, and representation \eqref{Mild-zm} of the solution follow in the same way as  in the proof of Theorem \ref{th:main-em}.
To prove the convergence as $t\to +\infty$, we streamline the notations by setting  $G=G(t,x)$ to be the heat kernel for equation \eqref{HE-main-zm}:
$$
G(t,x)=\frac{1}{(4\nu t)^{\dd/2}}\, e^{-|x|^2/(4\nu t)}.
$$
 Given a function $f=f(x)$ from $\mathcal{S}_0(\bRd)$, denote by
$u^{{\mathrm{H}},f}=u^{{\mathrm{H}},f}(t,x)$ the  solution of the deterministic heat equation
with initial condition $f$:
\begin{equation*}
u^{{\mathrm{H}},f}(t,x)=\int_{\bRd} G(t,x-y)f(y)dy.
 \end{equation*}
Then
$$
u(t,x)=u^{{\mathrm{H}},\varphi}(t,x)+\sigma \int_0^t \int_{\bRd} G(t-s,x-y)\,W(ds,dy)
$$
and, using \eqref{dual-rg} and \cite[Theorem 5.4]{DaPr1},
\begin{equation*}
\begin{split}
u[t,f]&:=\langle f, u(t) \rangle_{0,\gamma} \\
&=
u^{{\mathrm{H}},\varphi}[t,f]+  \sigma \int_0^t \int_{\bRd}\left(\int_{\bRd}G(t-s,x-y)f(x)dx\right)\,W(ds,dy)\\
& = u^{{\mathrm{H}},\varphi}[t,f]+  \int_0^t\int_{\bRd} u^{{\mathrm{H}},f}(t-s,y)\,W(ds,dy).
  \end{split}
  \end{equation*}
By independence of $\varphi$ and $W$,
\begin{equation*}
\begin{split}
\bE\Big(u[t,f]u[t,g]\Big)\!=\!\bE\Big(u^{{\mathrm{H}},\varphi}[t,f]u^{{\mathrm{H}},\varphi}[t,g]\Big)\!+\!
\sigma^2\!\int_0^t\int_{\bRd}u^{{\mathrm{H}},f}(t-s,y)u^{{\mathrm{H}},g}(t-s,y)\, dy\, ds\\
=\bE\Big(u^{{\mathrm{H}},\varphi}[t,f]u^{{\mathrm{H}},\varphi}[t,g]\Big)+
\sigma^2\int_0^t\int_{\bRd}u^{{\mathrm{H}},f}(s,y)u^{{\mathrm{H}},g}(s,y)\, dy\, ds.
\end{split}
\end{equation*}
Next,
$$
\hat{u}^{{\mathrm{H}},f}(s,\xi)=\hat{f}(\xi)e^{-s\nu|\xi|^2},
$$
and then the Fourier isometry implies
\begin{equation}
\label{eq-aux-zm}
\begin{split}
\bE\Big(u[t,f]u[t,g]\Big)&=\iint\limits_{\bRd\times\bRd}e^{-t\nu(|\xi|^2+|\eta|^2)}
\bE\big(\hat{\varphi}(\xi)\hat{\varphi}(\eta)\big)\overline{\hat{f}(\xi)\hat{g}(\eta)}\,d\xi\,d\eta\\
&+
\sigma^2\int_0^t\int_{\bRd}
\hat{f}(\xi)\overline{\hat{g}(\xi)}e^{-2s\nu|\xi|^2}\, d\xi\, ds.
\end{split}
\end{equation}
The first term on the right-hand side of \eqref{eq-aux-zm} goes to zero as $t\to \infty$ by the dominated convergence theorem,
because, by assumption,
$$
\int_{\bRd}(1+|\xi|^2)^{-\gamma}\bE|\hat{\varphi}(\xi)|^2d\xi<\infty
$$
for some $\gamma>\dd$, and, for $f,g\in \cS_0(\bRd)$,
$$
\sup_{\xi}(1+|\xi|^2)^{\gamma}|\hat{f}(\xi)|<\infty,\ \
\sup_{\eta}(1+|\eta|^2)^{\gamma} |\hat{g}(\eta)|<\infty.
$$
With $\varepsilon=0$, we no longer have \eqref{Heat-zero-ep} and therefore have to make additional assumptions about the
initial condition to achieve the desired convergence.

As a result,
$$
\lim_{t\to +\infty}\bE\Big(u[t,f]u[t,g]\Big)=
\sigma^2\int_{\bRd}\frac{\hat{f}(\xi)\overline{\hat{g}(\xi)}}{2\nu |\xi|^2}\, d\xi.
$$
Together with  \eqref{eq:GFF-12},  the last equality completes the proof.
\end{proof}

Analysis of equation \eqref{HE-main-hh}  in the scale  $\mathbb{H}_{\tilde{\Lambda}}$
is equivalent to analysis of equation \eqref{HE-main-bd}
 in the scale $\mathbb{H}_{{\Lambda}}$:
 similar to Theorem \ref{th:main-bd} and Corollary \ref{cor:main-bd}, the
 distribution of $\sigma(2\nu)^{-1/2}\tilde{W}$ is  the unique invariant measure for equation
 \eqref{HE-main-hh}. The only difference is that now we have $\lambda_k$ of order $k^{1/(2\dd)}$
rather than $k^{1/\dd}$.

 Let
 $\mfk{h}_k,\ k\geq 1,$ be the Hermite functions and let $\lambda_k,\ k\geq 1,$ be the
 corresponding eigenvalues of the operator $\tilde{\Lambda}^2$.

\begin{theorem}
\label{th:main-hh}
Assume that $\varphi\in \tilde{H}^{-\gamma}$ and $\gamma>\dd$. Then
\begin{enumerate}
\item Equation \eqref{HE-main-hh} has a unique solution  for every $T>0$;
 \item For every $t>0$, $u(t)\in L_2\big(\Omega; \tilde{H}^{1-\gamma}\big)$ and
$$
u(t)=\sum_{k=1}^{\infty} e^{-\nu\lambda_k^2t}\varphi_k\mfk{h}_k + \sum_{k=1}^{\infty} \tilde{u}_k(t) \mfk{h}_k,
$$
where and $\tilde{u}_k(t),\ k\geq 1,$ are independent Gaussian
random variables with mean zero and variance
$$
\bE \tilde{u}_k^2(t)=\frac{\sigma^2}{2\nu\lambda_k^2}\Big(1-e^{-2\nu\lambda_k^2 t}\Big);
$$
\item As $t\to +\infty$, the $\tilde{H}^{1-\gamma}$-valued random variables $u(t)$
converge weakly to $\sigma(2\nu)^{-1/2}\tilde{W}$;
\item Equation \eqref{HE-main-hh} is ergodic and the unique invariant measure is the distribution
of $\sigma(2\nu)^{-1/2}\tilde{W}$ on $\tilde{H}^{1-\gamma}$;
\item If $\varphi\wc\sigma(2\nu)^{-1/2}\tilde{W}$, then $u(t)\wc \sigma(2\nu)^{-1/2}\tilde{W}$ for all $t>0$;
\item If $\bE \varphi=0$, then, for each $t>0$, the measure generated by $u(t)$ on $\tilde{H}^{1-\gamma}$ is absolutely continuous
with respect to the measure generated by $\sigma(2\nu)^{-1/2}\tilde{W}$.
 \end{enumerate}
\end{theorem}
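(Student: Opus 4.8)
The plan is to follow the blueprint already established for the bounded domain in Theorem \ref{th:main-bd} and Corollary \ref{cor:main-bd}, since the operator $\tilde{\Lambda}^2$ is self-adjoint with pure point spectrum and the Hermite functions $\mfk{h}_k$ diagonalize it. The only structural change is the eigenvalue growth: by the Weyl asymptotics $\lambda_k^2\sim(2\dd!)^{1/\dd}k^{1/\dd}$ we are in the setting of Definition \ref{def-HS} with $\alpha=1/(2\dd)$, so the threshold $\gamma>1/(2\alpha)=\dd$ in the hypothesis is exactly what makes the cylindrical noise live in the ambient space, namely $W\in L_2\big(\Omega;\cC((0,T);\tilde{H}^{-\gamma})\big)$ by \eqref{CBM-Rd}. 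Throughout I treat $\tilde{\Lambda}^2$ as the generator of the dissipative Hermite (Mehler) semigroup, so that projection produces decaying modes consistent with part (2).

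For part (1) and part (2), I would first obtain existence and uniqueness by applying \cite[Theorem 3.1]{Roz} in the Gelfand triple $\tilde{H}^{1-\gamma}\subset\tilde{H}^{-\gamma}\subset\tilde{H}^{-\gamma-1}$ with $A=-\nu\tilde{\Lambda}^2$ and $M(t)=W(t)$; the coercivity needed by \cite{Roz} is immediate from the spectral representation and the positivity of $\tilde{\Lambda}^2$ in [O2], while the admissibility of the noise is precisely \eqref{CBM-Rd}. To get the explicit representation I would project onto $\mfk{h}_k$: writing $u(t)=\sum_k u_k(t)\mfk{h}_k$ and using \eqref{CBM0} together with $\tilde{\Lambda}^2\mfk{h}_k=\lambda_k^2\mfk{h}_k$, each coordinate solves the scalar Ornstein--Uhlenbeck equation $du_k(t)=-\nu\lambda_k^2 u_k(t)\,dt+\sigma\,dw_k(t)$, whence $u_k(t)=\varphi_k e^{-\nu\lambda_k^2 t}+\tilde{u}_k(t)$ with $\tilde{u}_k(t)=\sigma\int_0^t e^{-\nu\lambda_k^2(t-s)}\,dw_k(s)$. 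The It\^o isometry then gives $\bE\tilde{u}_k^2(t)=\sigma^2\int_0^t e^{-2\nu\lambda_k^2(t-s)}\,ds=\tfrac{\sigma^2}{2\nu\lambda_k^2}\big(1-e^{-2\nu\lambda_k^2 t}\big)$, exactly as in \eqref{HES-bnd-var}; the independence of the $\tilde{u}_k$ across $k$ is inherited from that of the $w_k$. Membership in $\tilde{H}^{1-\gamma}$ follows from $\bE\|u(t)\|_{1-\gamma}^2\le\|S_t\varphi\|_{1-\gamma}^2+\tfrac{\sigma^2}{2\nu}\sum_k\lambda_k^{-2\gamma}$, and the series converges for $\gamma>\dd$ since $\lambda_k^{-2\gamma}\asymp k^{-\gamma/\dd}$.

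For the weak convergence in part (3) and the invariance in parts (4)--(5), I would argue exactly as in the final paragraph of the proof of Theorem \ref{th:main-bd}. The deterministic term satisfies $\|S_t\varphi\|_{1-\gamma}^2=\sum_k\lambda_k^{2(1-\gamma)}e^{-2\nu\lambda_k^2 t}\varphi_k^2\to0$, and each $\tilde{u}_k(t)$ converges in distribution to a centered Gaussian of variance $\tfrac{\sigma^2}{2\nu\lambda_k^2}$, i.e.\ to $\sigma(2\nu)^{-1/2}\zeta_k/\lambda_k$ with $\zeta_k$ iid standard normal. Expressed in the orthonormal basis $\{\lambda_k^{-(1-\gamma)}\mfk{h}_k\}$ of $\tilde{H}^{1-\gamma}$, the covariance eigenvalues $\lambda_k^{2(1-\gamma)}\bE\tilde{u}_k^2(t)$ increase monotonically to $\tfrac{\sigma^2}{2\nu}\lambda_k^{-2\gamma}$, whose sum is finite; convergence of these trace-class covariances plus the mean going to zero yields weak convergence of the Gaussian measures to the law of $\sigma(2\nu)^{-1/2}\sum_k(\zeta_k/\lambda_k)\mfk{h}_k=\sigma(2\nu)^{-1/2}\tilde{W}$, where $\tilde{W}$ is the isonormal process on $\tilde{H}^1$ by Proposition \ref{prop:IGP} with $r=1$. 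Invariance of this law and the identity $u(t)\wc\sigma(2\nu)^{-1/2}\tilde{W}$ when $\varphi\wc\sigma(2\nu)^{-1/2}\tilde{W}$ are read off directly from the stationarity of the diagonal Ornstein--Uhlenbeck system and the variance formula, giving ergodicity and uniqueness of the invariant measure.

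Finally, part (6) is the Feldman--H\'ajek dichotomy for product Gaussian measures, applied as in Corollary \ref{cor:main-bd} via \cite[Example 2.7.6]{Bogachev-GM}: when $\bE\varphi=0$ the two laws are centered Gaussian products whose standard deviations satisfy $m_k=n_k\big(1-e^{-2\nu\lambda_k^2 t}\big)^{1/2}$, so that $\sum_k(m_k/n_k-1)^2\asymp\sum_k e^{-4\nu\lambda_k^2 t}<\infty$ because $\lambda_k^2\to\infty$, forcing equivalence. I expect the only genuine (as opposed to bookkeeping) obstacle to be the verification step feeding \cite{Roz} and the mild representation: one must confirm that the Hermite semigroup generated by $-\nu\tilde{\Lambda}^2$ satisfies the coercivity and decay needed for the general SPDE machinery in the scale $\mathbb{H}_{\tilde{\Lambda}}$, and that $\gamma>\dd$ is both necessary and sufficient for the noise to be accommodated. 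Everything else transfers verbatim from the bounded-domain analysis once $\alpha=1/(2\dd)$ is substituted for $\alpha=1/\dd$.
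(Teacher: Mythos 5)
Your proposal is correct and follows exactly the route the paper intends: the paper gives no separate proof of Theorem \ref{th:main-hh}, stating only that the analysis in the scale $\mathbb{H}_{\tilde{\Lambda}}$ is equivalent to that of Theorem \ref{th:main-bd} and Corollary \ref{cor:main-bd} with $\lambda_k$ of order $k^{1/(2\dd)}$ instead of $k^{1/\dd}$, which is precisely the diagonalization-plus-Ornstein--Uhlenbeck argument you carry out, including the Kakutani criterion for part (6). You also, correctly, take the drift to be $-\nu\tilde{\Lambda}^2$ so as to match the decay $e^{-\nu\lambda_k^2 t}$ asserted in the theorem, despite the sign as printed in \eqref{HE-main-hh}.
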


\section{Some Comments on the One-Dimensional Case}
\label{Sec:OneD}

In one space dimension, the Gaussian free field $\bar{W}=\bar{W}(x), \ x\in \mathcal{O}\subseteq \bR$, is a regular,
as opposed to  a generalized, process: $\bar{W}(x)$ is a zero-mean Gaussian random variable for each $x\in \cO$,

If $\mathcal{O}=(a,b)$ is a bounded interval, then
\begin{equation}
\label{GFF-int}
\bar{W}(x)=\sum_{k=1}^{\infty} \frac{\zeta_k}{ {\lambda_k}}\,\mfk{h}_k(x).
\end{equation}
 In \eqref{GFF-int},
 \begin{itemize}
 \item $\zeta_k,\ k\geq 1,$ are idd   standard Gaussian random variables;
 \item $\mfk{h}_k=\mfk{h}_k(x)$ and $-\lambda_k^2<0$,   $k\geq 1,$ are the normalized eigenfunctions and the  eigenvalues of the
 Laplacian on $(a,b)$ with suitable boundary conditions:
 $$
\mfk{h}_k''(x)=-\lambda_k^2\mfk{h}_k(x),\ \int_a^b\mfk{h}_k^2(x)\, dx=1,\ \int_a^b\mfk{h}_k(x)\mfk{h}_m(x)\, dx=0,\ k\not= m.
 $$
 \end{itemize}
 The series in \eqref{GFF-int} converges with probability one because, by \eqref{Weyl0},
$\lambda_k \sim c k$.
Moreover,
$$
 \mathbb{E}\bar{W}(x)=0,\ \mathbb{E}\Big(\bar{W}(x)\bar{W}(y)\Big)=\sum_{k=1}^{\infty} \frac{\mfk{h}_k(x)\mfk{h}_k(y)}{\lambda_k^2}
 =\Phi_{\mathcal{O}}(x,y),
$$
 where $\Phi_{\mathcal{O}}$ is Green's function of the Laplacian on $(a,b)$ with appropriate boundary conditions, which also shows that
 \eqref{GFF-int} is the {\em Karhunen-Lo\`{e}ve decomposition} of $\bar{W}$ \cite[Example 3.2.18]{LR-SPDE}.
 Zero boundary conditions $\mfk{h}_k(a)=\mfk{h}_k(b)=0$ imply $\bar{W}$ is a (multiple of a) Brownian bridge on $[a,b]$,
 whereas $\mfk{h}_k(a)=\mfk{h}'_k(b)=0$ imply $\bar{W}$ is a (multiple of a) standard Brownian motion. Of course,   \eqref{GFF-int} is
 a particular case of \eqref{GFF-bnd000} and is consistent with the
 general definition \eqref{GFF-Gen} of the Gaussian free field.

For unbounded intervals, the convention is somewhat different.

If  $\mathcal{O}=(0,+\infty)$, then $\bar{W}$ is { defined} as the  standard Brownian motion; this convention, in particular, means the
  boundary condition at $x=0$ is fixed and is equal to zero.

If $\mathcal{O}=\bR$, then the Gaussian free field on $\mathcal{O}$ is  defined by
\bel{GFF1-1}
\breve{W}(x)=
\begin{cases}
W(x),& x>0\\
V(-x),& x<0.
\end{cases}
\ee
 In \eqref{GFF1-1},
 $W$ and $V$ are independent standard Brownian motions. In particular, $\breve{W}$ is a zero-mean Gaussian process with
covariance $\rho(x,y)=\bE\big(\breve{W}(x)\breve{W}(y)\big)$ given by
\bel{GFF1-2}
\rho(x,y)=
\begin{cases}
\min(|x|,|y|), & xy>0,\\
0,& xy<0.
\end{cases}
\ee
Equality \eqref{GFF1-2} means that  $\breve{W}$  is not a Gaussian free field in the sense of the general
definition \eqref{GFF-Gen} but rather the two-sided standard Brownian motion, which also happens to be the
 {\em L\'{e}vy Brownian motion} on $\bR$; cf. \cite[Chapter VIII]{Levy}. Indeed,
\eqref{GFF1-2} implies
$$
\bE\big(\breve{W}(x)-\breve{W}(y)\big)^2=|x-y|,\ x,y\in \bR.
$$

An alternative description of $\breve{W}$ on $\bR$ is a random generalized function acting on $f\in \mathcal{S}(\bR)$ by
\bel{GFF1-3a}
\breve{W}[f]=\int_{-\infty}^{+\infty} f(x)\breve{W}(x)\, dx.
\ee
Then
\bel{GFF1-3b}
\bE\Big(\breve{W}[f]\breve{W}[g]\Big)=
\int_{-\infty}^{+\infty}\int_{-\infty}^{+\infty}f(x)g(y)\rho(x,y)\, dxdy.
\ee
Given a function $f\in \mathcal{S}(\bR)$,  define the function $F=F(x)$ by
\bel{GFF1-3d}
F(x)=
\begin{cases}
\ds \int_{-\infty}^x f(t)\, dt,&\ x<0;\\
 & \\
\ds -\int_x^{+\infty} f(t)\, dt,&\ x>0.
\end{cases}
\ee
By direct computation, the function $F$ is continuous except possibly at $x=0$, and so
$$
F'(x)=f(x)-\left(\int_{-\infty}^{+\infty} f(t)\,dt\right)\delta(x),
$$
where $\delta(x)$ is the point mass (Dirac delta function) at zero.
Moreover, if  $f\in \mathcal{S}(\bR)$, then, for all $p>0$,
\bel{GFF1-3d2}
\lim_{|x|\to +\infty} |x|^p \ |F(x)|=0.
\ee
We can integrate by parts in \eqref{GFF1-3a} using \eqref{GFF1-1}:
\bel{GFF1-3e}
\breve{W}[f]=\int_0^{+\infty} F(-x)\, dV(x)-\int_0^{+\infty}F(x)\, dW(x).
\ee
The transition from \eqref{GFF1-3a} to \eqref{GFF1-3e} essentially relies on \eqref{GFF1-3d2} and the equality $\breve{W}(0)=0$.

By \eqref{GFF1-3e}, we get an alternative form of \eqref{GFF1-3b}:
\bel{GFF1-4}
\bE\Big(\breve{W}[f]\breve{W}[g]\Big)=\int_{-\infty}^{+\infty} F(x)G(x)\, dx;
\ee
 the function $G$ is constructed from the function $g$ according to \eqref{GFF1-3d}.

 If
$$
\hat{f}(\xi)=\frac{1}{\sqrt{2\pi}}\int_{-\infty}^{+\infty} e^{-\mfi x\xi}f(x)\, dx
$$
is the Fourier transform of $f$, then, by direct computation,
\bel{GFF1-4a}
\hat{F}(\xi)=\frac{\hat{f}(\xi)-\hat{f}(0)}{\mfi \xi};
\ee
recall that
$$
\hat{f}(0)=\int_{-\infty}^{+\infty} f(x)\, dx.
$$
By \eqref{GFF1-4a} and the $L_2$ isometry of the Fourier transform, \eqref{GFF1-4} becomes
\bel{GFF1-5}
\bE\Big(\breve{W}[f]\breve{W}[g]\Big)=\int_{-\infty}^{+\infty} \frac{\big(\hat{f}(\xi)-\hat{f}(0)\big)\overline{\big(\hat{g}(\xi)-\hat{g}(0)\big)}}{|\xi|^2}\, d\xi;
\ee
as usual, $\overline{z}$ denotes the complex conjugate of $z$.
The integral on the right-hand side of \eqref{GFF1-5} converges as long as
the functions $\hat{f}$ and $\hat{g}$ are differentiable at zero, which is the case, for example, if
\bel{GFF1-55}
\int_{-\infty}^{+\infty} |xf(x)|\, dx< \infty,\ \ \int_{-\infty}^{+\infty} |xg(x)|\, dx< \infty,
\ee
and certainly  holds if $f,g\in \mathcal{S}(\bR)$.
With \eqref{GFF1-2} in mind,  condition \eqref{GFF1-55} is also sufficient for convergence of the integral on the right-hand side of
\eqref{GFF1-3b}.

Equality  \eqref{GFF1-5} confirms that  $\breve{W}$ from \eqref{GFF1-1} is not a Gaussian free field in the sense of
Definition \ref{def:GFF-12}.

\section{Summary and Further Directions}
\label{Sec:Sum}

Let $\mathcal{L}$ be a self-adjoint elliptic operator on a separable Hilbert space $H$.
 Under suitable conditions, we expect that, as $t\to +\infty$,  the
solution of the parabolic equation
$$
\frac{\partial u}{\partial t}=\mathcal{L} u +f
$$
to converge to the solution of the elliptic equation
$$
\mathcal{L} v = -f.
$$
The results of this paper show that, under some conditions, the
solution of the stochastic evolution equation
\bel{S1}
\frac{\partial u}{\partial t}=\mathcal{L}u + \dot{W},
\ee
driven by a cylindrical Brownian motion on $H$, converges in distribution  to the solution of
\bel{S2}
(-\mathcal{L})^{1/2} v = {V},
\ee
where ${V}$ is an isonormal Gaussian process on $H$.
In particular, we establish this convergence when $\mathcal{L}$ is the Laplace operator and
the solution of \eqref{S2} is  the Gaussian free field.
One could study equation \eqref{S1} with other operators $\mathcal{L}$ and driving processes $\dot{W}$, resulting in
 different limits coming out of equation \eqref{S2}. Beside purely mathematical interest, another motivation for this study is
scaling limits of  (mostly yet to be discovered) discrete models.


\def\cprime{$'$}
\providecommand{\bysame}{\leavevmode\hbox to3em{\hrulefill}\thinspace}
\providecommand{\MR}{\relax\ifhmode\unskip\space\fi MR }
\providecommand{\MRhref}[2]{%
  \href{http://www.ams.org/mathscinet-getitem?mr=#1}{#2}
}
\providecommand{\href}[2]{#2}

\end{document}